\newtheorem{cor}{Corollary}
\newcommand{\be}{\begin{equation}}
\newcommand{\ee}{\end{equation}}
\newcommand{\bx}{\mathbf{x}}
\newcommand{\by}{\mathbf{y}}
\newcommand{\Dx}{\Delta x}
\newcommand{\bu}{\mathbf{u}}
\newcommand{\bw}{\mathbf{w}}
\newcommand{\Dt}{\Delta t}
\newcommand{\dx}{\Delta x}
\newcommand{\dt}{\Delta t}
\newcommand{\aij}{\alpha_{i,j}}
\newcommand{\bij}{\beta_{i,j}}
\newcommand{\m}[1]{\mathbf{#1}}
\newcommand{\mA}{\m{A}}
\newcommand{\mAh}{\hat{\m{A}}}
\newcommand{\mD}{\m{D}}
\newcommand{\mS}{\m{S}}
\newcommand{\mT}{\m{T}}
\newcommand{\mR}{\m{R}}
\newcommand{\mP}{\m{P}}
\newcommand{\mI}{\m{I}}
\newcommand{\mzero}{\m{0}}
\renewcommand{\v}[1]{\boldsymbol{#1}}
\newcommand{\transpose}{^\mathrm{T}}
\newcommand{\vb}{\v{b}}
\newcommand{\vc}{\v{c}}
\newcommand{\ve}{\v{e}}
\newcommand{\vu}{\v{u}}
\newcommand{\vl}{\boldsymbol{l}}
\newcommand{\vf}{\v{f}}
\newcommand{\sspcoef}{\mathcal{C}}
\newcommand{\ceff}{\sspcoef_{\textup{eff}}}
\newcommand{\DtFE}{\Dt_{\textup{FE}}}
\newcommand{\tDtFE}{\tilde{\Dt}_{\textup{FE}}}
\newcommand{\ste}{\boldsymbol{\tau}}
\newcommand{\btheta}{\boldsymbol{\theta}}
\newcommand{\bff}{\mathbf{f}}
\newcommand{\bb}{\mathbf{b}}
\newcommand{\bbh}{\hat{\mathbf{b}}}
\newcommand{\mC}{\m{C}}
\renewcommand{\v}[1]{\mathbf{#1}}
\title{Strong Stability Preserving Integrating Factor Two-step Runge--Kutta Methods}
\author{%
Leah Isherwood\thanks{Mathematics Department, University of Massachusetts Dartmouth, 285 Old Westport Road,
North Dartmouth MA 02747.} \and
Zachary J. Grant\thanks{Department of Computational and Applied Mathematics, Oak Ridge National Laboratory, Oak Ridge TN 37830.}
\and 
Sigal Gottlieb\footnotemark[1]
}
\begin{document}
\maketitle


\bibliographystyle{siam}

\begin{abstract} 
Problems  with components that feature significantly different time scales, where
the stiff time-step restriction comes from a linear component, implicit-explicit (IMEX) methods alleviate 
this restriction if the concern is linear stability. However, when nonlinear non-inner-product stability
properties are of interest, such as in the evolution of hyperbolic
partial differential equations with shocks or sharp gradients,  
linear inner-product stability is no longer sufficient for convergence,
and so strong stability preserving (SSP)   methods are often needed. 
Where the SSP property is needed,  IMEX SSP Runge--Kutta (SSP-IMEX) methods have very restrictive time-steps. 
An alternative to  SSP-IMEX schemes is to adopt an integrating factor approach to handle the linear component exactly 
and step the transformed problem forward using some time-evolution method.
The strong stability properties of integrating factor Runge--Kutta methods were established in 
\cite{SSPIFRK-SINUM},
where it was shown that it is possible to define explicit integrating factor Runge--Kutta methods that 
preserve strong stability properties satisfied by each of the two components when coupled with 
forward Euler time-stepping.  It was proved that the solution will be SSP  if the transformed problem is stepped 
forward with an explicit SSP Runge--Kutta method that has non-decreasing abscissas. However,
explicit SSP Runge--Kutta methods have an order barrier of $p=4$, and sometimes higher order is desired.
In this work we consider explicit SSP two-step Runge--Kutta integrating factor methods to raise the order. 
We show that strong stability is ensured if the two-step Runge--Kutta 
method used to evolve the transformed problem is SSP and has non-decreasing abscissas.
We find such methods up to eighth order  and present  their SSP coefficients. Adding a step allows us to break 
the fourth order barrier on explicit SSP  Runge--Kutta methods;
furthermore, our explicit SSP two-step Runge--Kutta methods with non-decreasing abscissas typically 
have larger SSP coefficients than the corresponding
one-step methods.
A selection of our methods are tested for convergence and demonstrate the design order. 
We also show, for selected methods, that the SSP time-step predicted by the theory is a lower bound of the 
allowable time-step for linear and nonlinear problems that satisfy the total variation diminishing (TVD)  condition. 
We compare some of the non-decreasing abscissa SSP two-step Runge--Kutta methods to previously found 
methods that do not satisfy this criterion on linear and nonlinear TVD  test cases to show that this  non-decreasing 
abscissa  condition is indeed necessary in practice as well as theory. 
We also compare these results to our SSP integrating factor Runge--Kutta methods designed in \cite{SSPIFRK-SINUM}.
 
\end{abstract}

{\em This paper is dedicated to the memory of Saul Abarbanel. His wisdom, humor, and kindness were a gift to all who knew him.}

\section{Introduction\label{sec:intro}}

The behavior of the numerical solution  of a hyperbolic partial differential  equation (PDE) of the form
\begin{eqnarray}
\label{PDE}
U_t +f(U)_x = 0,
\end{eqnarray}
depends on properties of the spatial  discretization and of the  time discretization.  
When the solution is smooth, stability is guaranteed 
by analyzing the $L_2$ stability properties of the discretization applied to the linear problem.
However, when dealing with a non-smooth solution, the numerical solution may contain non-physical oscillations that prevent the approximation from converging uniformly and thus
 $L_2$ linear stability  is not sufficient to ensure convergence \cite{LeVequeBook}.  
To ensure that the numerical solution does not form stability-destroying oscillations, we require that
the numerical method satisfies nonlinear non-inner-product stability properties such as 
stability in the maximum norm or in the total variation (TV) semi-norm.

For nonlinear hyperbolic problems with discontinuous solutions we must, 
therefore, analyze the nonlinear non-inner-product stability properties
of a highly nonlinear complex spatial discretization combined with a high order time discretization. 
Instead of this difficult  task a method-of-lines formulation is generally followed: we develop a  spatial discretization that satisfies nonlinear non-inner-product stability properties when coupled with the forward Euler time stepping method. Next, we use a high order strong stability preserving (SSP) time discretization \cite{shu1988b, shu1988,ruuth2001,SpiteriRuuth2002,hundsdorfer2003,ketcheson2009, ketcheson2008, tsrk, SSPbook2011, msrk} which preserves the properties of the spatial discretization coupled with forward Euler.
Explicit strong stability preserving (SSP) Runge--Kutta methods were developed in \cite{shu1988b,shu1988}
to preserve the properties of total variation diminishing (TVD) spatial discretizations for 
hyperbolic conservation laws \eqref{PDE} with discontinuous solutions. They have since 
been developed extensively and have been widely used to preserve different numerical stability properties
needed in a variety of  application areas. 
Furthermore, many classes of SSP time-stepping methods have been studied, including 
SSP explicit and implicit linear multi-step methods \cite{SSPbook2011}, 
Runge--Kutta methods \cite{shu1988b,shu1988,ketcheson2008,ketcheson2009}, 
multi-stage multi-step methods \cite{Sandu,tsrk,msrk}, 
and multi-stage multi-derivative methods \cite{Grant1,Grant2}.

\subsection{SSP methods\label{SSPintro}}
The key to developing SSP time-stepping methods is ensuring that the methods can be re-written as convex combinations of forward Euler steps. To illustrate this concept, we begin with the PDE \eqref{PDE} above, and 
use a   spatial discretizations of $f(U)_x$ that ensures that when we evolve the resulting semi-discretized system of ordinary differential equations (ODEs)
\begin{eqnarray}
\label{ode}
u_t = F(u),
\end{eqnarray}
using the forward Euler  method, the  strong stability property in the convex functional $\| \cdot \|$ is satisfied
\begin{eqnarray} \label{FEstrongstability}
\|u^{n+1} \| = \| u^n + \dt F(u^{n}) \| \leq \| u^n \| ,
\end{eqnarray}
under a step size restriction
\begin{eqnarray} \label{FEcond}
0 \leq \dt \leq \DtFE.
\end{eqnarray}
%
Next, we use a higher order time integrator that can be written as a convex combination of forward Euler steps,
so that we ensure that any convex functional strong stability property that is satisfied 
by the forward Euler method will still be satisfied by the higher order time discretization, perhaps under a different time-step.
For example, an  $s$-stage explicit Runge--Kutta method (denoted eSSPRK$(s,p)$ where $p$ is the order)
can be written in Shu-Osher form \cite{SSPbook2011}:
\begin{eqnarray}
\label{rkSO}
u^{(0)} & =  & u^n, \nonumber \\
u^{(i)} & = & \sum_{j=0}^{i-1} \left( \aij u^{(j)} +
\dt \bij F(u^{(j)}) \right), \; \; \; \; i=1, . . ., s\\
 u^{n+1} & = & u^{(s)} . \nonumber
\end{eqnarray}
(Where $\sum_{j=0}^{i-1} \aij =1$ is required for consistency).
If  $\aij $ and $\bij$ are non-negative, and any  $\aij$ is zero only if its corresponding $\bij$ is zero,
then we can rearrange each stage  into a convex combination of forward Euler steps:
\[ \| u^{(i)}\|  =  
\left\| \sum_{j=0}^{i-1} \left( \aij u^{(j)} + \dt \bij F(u^{(j)}) \right) \right\|
=  \left\|  \sum_{j=0}^{i-1} \aij  \left(  u^{(j)} + \dt \frac{\bij}{\aij} F(u^{(j}) \right) \right\|    .\]  
Clearly, then,  \eqref{FEstrongstability}  tells us that 
 \[ \| u^{(i)}\|   \leq   \sum_{j=0}^{i-1} \aij  \, \left\| u^{(j)} + \dt \frac{\bij}{\aij} F(u^{(j}) \right\|  
 \leq  \|u^n\| , \]
while \eqref{FEcond} imposes the time-step restriction
\begin{eqnarray}
\dt \leq \min_{i,j} \frac{\aij}{\bij} \DtFE.
\end{eqnarray}
(We use the convention that  the ratio in considered infinite in the case  where a $\bij$ is equal to zero). 
This convex combination ensures that the internal stages also satisfy the strong stability property
under the same time-step restriction; 
this can be  important  when pressure, density, or water height, are simulated, because in these cases
a negative value at an intermediate or final stage may prevent  the simulation from continuing \cite{HesthavenCLbook}. 
Clearly, the convex combination condition is a sufficient condition for 
strong stability preservation for explicit Runge--Kutta methods;
in \cite{SSPbook2011, kraaijevanger1991,spijker2007} it was shown to be a  necessary condition as well.

This shows that when a higher order time discretization method is written as a convex combination
of forward Euler steps,  it will  {\em preserve}  the forward Euler condition \eqref{FEstrongstability}, 
under a modified time-step restriction $\Dt \le \sspcoef \DtFE$. 
When $ \sspcoef  >0$, the method is called {\em strong stability preserving} (SSP) with {\em SSP coefficient}
 $\sspcoef$ \cite{shu1988b}. While in the original papers  \cite{shu1988b, shu1988}, 
the term $\| \cdot \|$ was the total variation semi-norm, the strong stability preservation property holds
for any convex functional  $\| \cdot \|$ as long as the spatial discretization satisfies \eqref{FEstrongstability} in that convex functional for some $\DtFE$ as in \eqref{FEcond}.

Not all methods can be decomposed into convex combinations of forward Euler steps with
$\sspcoef >0$.  Explicit SSP Runge--Kutta methods  cannot exist for order $p>4$    \cite{kraaijevanger1991,ruuth2001}. 
Furthermore, the SSP coefficient $\sspcoef$ is restricted too, as
all explicit $s$-stage Runge--Kutta methods have a SSP bound $\sspcoef \leq s$ \cite{SSPbook2011}. 
Unlike for smooth problems, where implicit methods  (or implicit treatment of stiff terms)
can eliminate the time-step restriction needed for linear or nonlinear inner-product norm stability,
a strong stability preserving general linear method (GLM) of order  greater than one has a finite SSP time-step \cite{spijker1983}.  
In fact, it has been shown  \cite{lenferink1991,ferracina2008,ketcheson2009,SSPIMEX} that
the SSP  time-step restrictions for implicit and implicit-explicit (IMEX) SSP Runge--Kutta methods have a restrictive  
observed  bound of $\sspcoef \leq 2s$ .
The limitation on the SSP coefficient of implicit and IMEX SSP Runge--Kutta methods led to the study of 
SSP integrating factor Runge--Kutta methods in \cite{SSPIFRK-SINUM}.

\subsection{Overview of current paper}

The explicit SSP integrating factor Runge--Kutta methods and the associated theory developed in our prior work \cite{SSPIFRK-SINUM}, reviewed in Section \ref{sec:Review-IFRK}, allow us to alleviate the time-step restriction while preserving the strong stability property.
However, because  explicit SSP Runge--Kutta methods have an order barrier of $p\leq 4$, we cannot hope to
get higher order  explicit SSP integrating factor Runge--Kutta  methods. We could try to work with 
an integrating factor multi-step approach, which is SSP as long as the multi-step method is SSP (as we 
show in Subsection \ref{sec:multistepif}) but these methods have small SSP coefficients and require many steps for high order.
Instead, we follow the process in \cite{tsrk, msrk} and examine the SSP properties of integrating factor methods
based on explicit SSP two-step Runge--Kutta (eSSP-TSRK) methods such as those in \cite{tsrk}.
When these are used as a basis for integrating factor TSRK methods, the result is SSP provided that the abscissas are non-decreasing. 
In Section \ref{sec:background} we discuss explicit SSP two-step Runge--Kutta methods, review the optimization problem, 
and review the resulting  eSSP-TSRK methods presented in \cite{tsrk}. In Section \ref{sec:SSPIF} 
we develop the SSP theory for explicit SSP integrating factor two-step Runge--Kutta (SSPIF-TSRK) methods, 
which is very similar to that of the explicit SSP integrating factor 
Runge--Kutta (SSPIFRK) methods in \cite{SSPIFRK-SINUM}, 
and show that as long as these methods have non-decreasing abscissas the result is SSP. 
We show that  to find appropriate explicit SSP two-step Runge--Kutta for pairing with integrating factor methods,  
the optimization problem needs to be augmented by a simple condition on the abscissas. 
In Section \ref{sec:optimal} we present the optimized methods found using this approach, 
and discuss their features. In Section \ref{sec:test} we test these methods for convergence 
and for their SSP performance on numerical test cases.

\subsection{Efficient computation of the matrix exponential}
 It is important to note that in the current work as well as our prior work 
 \cite{SSPIFRK-SINUM,SSPIFRK-downwind}, 
 the cost of  computation of the  matrix exponential will be a critical factor in the ability 
 to efficiently implement  the proposed methods.  If the computation of the matrix exponential  
 is only needed once per simulation  (i.e. if $L$ is a constant coefficient operator), 
 the cost may be reasonable, but in some cases the exponential must be computed at every step,
and low-storage, matrix-free approaches are needed. New approaches to efficiently compute the
matrix exponential have been recently considered in 
\cite{AlMohyHigham,Sidje, NiesenWright, GaudreaultRainwaterTokman}, 
and such approaches, as well as others, will be critical for bringing the integrating factor 
methods proposed in \cite{SSPIFRK-SINUM,SSPIFRK-downwind} and the current work into practical use.

 \section{Review of SSP integrating factor Runge--Kutta methods}\label{sec:Review-IFRK}

 
 In \cite{SSPIFRK-SINUM,SSPIFRK-downwind} we considered  problems of the form
 \begin{eqnarray}\label{IFproblemform}
u_t = Lu + N(u)  
\end{eqnarray}
that result from a semi-discretization of a hyperbolic PDE.
We focus on the case where the problem has a linear constant coefficient component
 $L u $ and a nonlinear component $N(u)$, such  that
\begin{eqnarray}  \label{nonlinearFEcond}
\| u^n + \dt N(u^n) \| \leq \|u^n\| \qquad \mbox{for} \qquad \dt \leq \DtFE
\end{eqnarray}
and
\begin{eqnarray} \label{linearFEcond}
\| u^n + \dt L u^n \| \leq \|u^n\| \qquad \mbox{for} \qquad \dt \leq \tDtFE.
\end{eqnarray}
The notation  $\| \cdot \|$ here refers to some convex functional (not usually a norm)
needed for nonlinear non-inner-product stability.
Of particular interest is the case where $L$ is a linear operator that significantly restricts the 
allowable time-step due to strong stability concerns, i.e. where $\tilde{\DtFE} <<  {\DtFE} $. 
For such cases, where nonlinear non-inner-product stability properties are of concern, 
an implicit or implicit-explicit (IMEX) SSP scheme doesn't significantly alleviate the allowable time-step \cite{SSPIMEX}
and such methods will result in severe constraints 
 on the allowable time-step \cite{lenferink1991,ketcheson2009,ferracina2008,SSPbook2011,SSPIMEX}.

This motivated our investigation of integrating factor methods, where the 
 linear component $L u$ is handled exactly and then 
 the time-step restriction is dependent only upon the step size restriction $ \dt \leq {\DtFE}$ 
 coming from the  the nonlinear component $N(u)$. 
The methodology we considered in \cite{SSPIFRK-SINUM}  in order to  alleviate the restriction on the allowable time-step begins with an integrating factor approach:
\begin{eqnarray*}
e^{-L t } u_t - e^{-L t }  Lu = e^{-L t } N(u)  \\
\left( e^{-L t } u \right)_t = e^{-L t } N(u).
\end{eqnarray*}
Defining  $w = e^{-L t } u $ gives us  the modified ODE system
\begin{eqnarray} \label{IF_ODE}
w_t  = e^{-L t } N(e^{L t } w)   = G(w),
\end{eqnarray}
which can then be evolved forward in time. In  \cite{SSPIFRK-SINUM}  we used an
explicit SSP Runge--Kutta method of the form \eqref{rkSO}. We observed that this approach is not sufficient to
ensure strong stability of the system. However,  as we showed, if the transformed problem 
is stepped forward using a SSP Runge--Kutta method that is carefully chosen to have
non-decreasing abscissas the resulting method \textit{will} preserve the desired  strong stability property.

\subsection{Motivating example} \label{motivating}
Before we repeat the formal results from \cite{SSPIFRK-SINUM}, which explain how non-decreasing abscissas in the Runge--Kutta method are needed to preserve strong stability when using an integrating factor approach, we present the following example, presented in
\cite{SSPIFRK-SINUM}, which illustrates this fact:

Consider the partial differential equation
 \begin{align} \label{BurgersAdvection}
u_t + a u_x +  \left( \frac{1}{2} u^2 \right)_x & = 0 \hspace{.75in}
    u(0,x)  =
\begin{cases}
1, & \text{if } 0 \leq x \leq 1/2 \\
0, & \text{if } x>1/2 
\end{cases}
\end{align}
where $x \in [0,1]$ and the boundary conditions are periodic. In this example we consider $a=10$.
A first-order upwind difference with $400$ points in space is used to semi-discretize the linear term $L u \approx -10 u_x$.  
The nonlinear terms $N(u) \approx - \left( \frac{1}{2} u^2 \right)_x $ are approximated using a  fifth order WENO 
finite difference \cite{WENO}.

The transformed problem \eqref{IF_ODE} is stepped forward in time by the explicit SSP Runge--Kutta method with non-decreasing abscissas,
denoted eSSPRK$^+(3,3)$  in  \cite{SSPIFRK-SINUM}, which has  $\sspcoef = \frac{3}{4}$. The resulting SSP integrating factor
Runge--Kutta method is:
\begin{eqnarray}     \label{SSPIFRK33}
     u^{(1)} &= & \frac{1}{2}e^{\frac{2}{3} \dt L} u^n +  \frac{1}{2} e^{\frac{2}{3} \dt L}  \left(u^{n}+\frac{4}{3}\dt N(u^n)\right) \nonumber \\
     u^{(2)} &= & \frac{2}{3}  e^{\frac{2}{3} \dt L} u^n + \frac{1}{3} \left(u^{(1)} + \frac{4}{3} \dt N(u^{(1)})\right) \nonumber \\
     u^{n+1} & = & \frac{59}{128} e^{ \dt L} u^n + \frac{15}{128} e^{ \dt L}  \left(u^n +  \frac{4}{3} \dt N(u^n)\right)  \\
     && + \frac{27}{64} e^{\frac{1}{3} \dt L} \left(u^{(2)} 			+ \frac{4}{3} \dt N(u^{(2)})\right). \nonumber
\end{eqnarray}
For comparison, the transformed problem \eqref{IF_ODE} is also stepped forward in time by the  
explicit eSSPRK(3,3) Runge--Kutta method  in \cite{shu1988b} 
which has  $\sspcoef=1$ (this method is widely known as the Shu-Osher method). 
The resulting (non-SSP) integrating factor Runge--Kutta method becomes:
\begin{eqnarray} \label{SOIF}
     u^{(1)} &= & e^{L\dt}u^n + e^{L\dt} \dt N(u^n) \nonumber \\
     u^{(2)} &= & \frac{3}{4} e^{\frac{1}{2}L\dt} u^n + \frac{1}{4} e^{-\frac{1}{2}L\dt} \left(u^{(1)} + \dt N(u^{(1)})\right)  \nonumber \\
     u^{n+1} & = & \frac{1}{3}e^{L\dt}  u^n +\frac{2}{3} e^{\frac{1}{2}L\dt}  \left(u^{(2)} +  \dt N(u^{(2)})\right). 
\end{eqnarray}
Exponentials with negative exponents appear in this formulation  due to the fact that the SSP Shu-Osher method 
it is based on has decreasing abscissas.
Using different values of $\dt$ we  evolved the solution 25 time steps using 
the integrating factor Runge--Kutta methods \eqref{SSPIFRK33} and \eqref{SOIF},
and calculated the maximal rise in total variation over each stage for  all  time steps. 

\begin{figure}[h]
\centering
 \label{fig:motivating} \includegraphics[scale=.375]{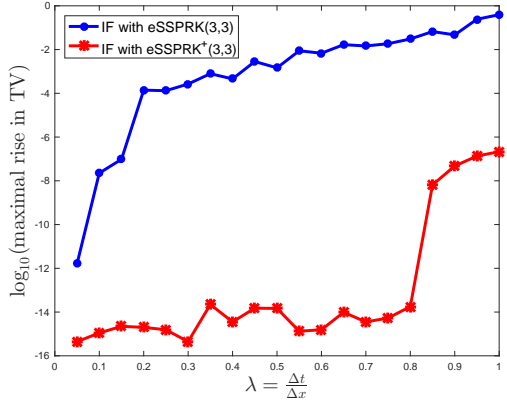} 
    \caption{ Motivating example: TV behavior of the evolution over 25 time steps using the 
     integrating factor methods \eqref{SOIF} (blue dots) and \eqref{SSPIFRK33} (red stars).
}
\end{figure}

In Figure \ref{fig:motivating} we show the $\log_{10}$ of the maximal rise in total variation vs. the value of $\lambda = \frac{\dt}{\dx}$ of the evolution 
using \eqref{SOIF} (in blue) and  using \eqref{SSPIFRK33} (in red). The results from 
 \eqref{SSPIFRK33} maintain a small maximal rise in total variation up to $\lambda \approx 0.8$,  while 
in the results from \eqref{SOIF} we observe that the  maximal rise in total variation  is large even for very 
small values of $\lambda$. This behavior is due to the fact that basing an integrating factor  Runge--Kutta 
method on an explicit SSP Runge--Kutta method  is not enough to ensure the preservation of a strong stability 
property -- the method must also satisfy the non-decreasing abscissa  condition  to ensure that the strong 
stability property is preserved. 

\subsection{SSP analysis of integrating factor Runge--Kutta method}
To understand the results in \cite{SSPIFRK-SINUM}  recall that each stage of the explicit Runge--Kutta method  \eqref{rkSO}  
applied to the transformed problem \eqref{IF_ODE} becomes
\begin{eqnarray*}
u^{(i)}   &=&    \sum_{j=0}^{i-1} \left( \aij e^{L (t_i-t_j) }  u^{(j)} + \dt \bij e^{L ( t_i-t_j)}  N(u^{(j)}) \right) \\
 &=&  \sum_{j=0}^{i-1} \left( \aij e^{L (c_i-c_j) \dt }  u^{(j)} + \dt \bij e^{L ( c_i-c_j)\dt}  N(u^{(j)}) \right)\\
 &=&  \sum_{j=0}^{i-1} e^{L (c_i-c_j) \dt } \left( \aij u^{(j)} + \dt \bij N(u^{(j)}) \right)  . 
 \end{eqnarray*}
 In the following, we consider each piece of this formula. Each of the following results is taken directly from
 \cite{SSPIFRK-SINUM} and the proofs appear in that work. We first consider the behavior of the exponential operator $e^{\tau L}$. 
 The following theorem ensures that this term is strongly stable as long as $\tau \geq 0$:
\begin{theorem} \label{thm:exp_FEcond}
If  a  linear operator $L$  satisfies  \eqref{linearFEcond} for some value of $\tDtFE > 0 $,
then 
\begin{equation} \label{EXPcondition}
\| e^{\tau L} u^n \|  \leq \| u^n \|   \; \; \; \; \forall \; \tau \geq 0 . 
\end{equation}
\end{theorem}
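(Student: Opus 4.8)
The plan is to exploit the hypothesis that forward Euler applied to the purely linear problem $u_t = Lu$ is strongly stable for $0 \le \dt \le \tDtFE$, i.e. $\|u^n + \dt L u^n\| = \|(I + \dt L)u^n\| \le \|u^n\|$, and to reach $e^{\tau L}$ by taking a limit of many tiny forward Euler steps. Concretely, fix $\tau > 0$ (the case $\tau = 0$ is trivial since $e^{0} = I$), pick an integer $m$ large enough that $\tau/m \le \tDtFE$, and set $\dt_m = \tau/m$. Then apply the linear forward Euler step $m$ times, so that $(I + \dt_m L)^m u^n$ is obtained, and by induction on the number of steps together with \eqref{linearFEcond} we get
\begin{equation*}
\left\| (I + \tfrac{\tau}{m} L)^m u^n \right\| \le \left\| (I + \tfrac{\tau}{m} L)^{m-1} u^n \right\| \le \dots \le \|u^n\|.
\end{equation*}
The induction is clean because each factor $(I + \dt_m L)$ is applied with the same admissible step size $\dt_m \le \tDtFE$, so \eqref{linearFEcond} applies verbatim at every stage.

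Next I would pass to the limit $m \to \infty$. Since $L$ is a fixed (finite-dimensional, constant-coefficient) linear operator, the classical identity $(I + \tfrac{\tau}{m} L)^m \to e^{\tau L}$ holds in operator norm, hence $(I + \tfrac{\tau}{m} L)^m u^n \to e^{\tau L} u^n$ in the underlying vector space. The functional $\| \cdot \|$ is a convex functional on this space; for the argument I need it to be at least lower semicontinuous (which convex functionals that arise here — seminorms, the total variation seminorm, sublevel-set gauges — all are, and in finite dimensions a finite convex functional is automatically continuous). Taking the limit inside the inequality then yields
\begin{equation*}
\| e^{\tau L} u^n \| \le \liminf_{m \to \infty} \left\| (I + \tfrac{\tau}{m} L)^m u^n \right\| \le \|u^n\|,
\end{equation*}
which is \eqref{EXPcondition}.

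The main obstacle — really the only delicate point — is the interplay between the limit and the convex functional $\| \cdot \|$: one must be sure that passing to the limit does not destroy the inequality, which requires (lower semi)continuity of $\| \cdot \|$ rather than just convexity. In the finite-dimensional method-of-lines setting considered here this is automatic, so the argument goes through; if one wanted the statement in an infinite-dimensional Banach-space setting one would instead invoke the Hille–Yosida/Lumer–Phillips machinery, writing $e^{\tau L}$ as a strong limit of the resolvent-based approximants and using that $\| \cdot \|$ is continuous with respect to the norm. A secondary bookkeeping point is that $\tau/m \le \tDtFE$ can always be arranged for $m$ large, so there is no loss of generality in assuming it; and the restriction $\tau \ge 0$ is exactly what makes each forward Euler step use a nonnegative (hence admissible) step size — for $\tau < 0$ the construction would require negative steps, for which \eqref{linearFEcond} says nothing, matching the fact that the motivating example shows negative exponents can indeed be destabilizing.
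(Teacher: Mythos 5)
Your proof is correct and follows essentially the same route as the paper's (the proof given in the cited reference \cite{SSPIFRK-SINUM}): write $e^{\tau L}u^n$ as the limit of $(I+\tfrac{\tau}{m}L)^m u^n$ with $m$ large enough that $\tau/m \leq \tDtFE$, apply the forward Euler condition \eqref{linearFEcond} repeatedly, and pass to the limit using continuity of the convex functional. Your extra remarks on lower semicontinuity and the necessity of $\tau \geq 0$ are sound but not needed beyond what the paper's argument already uses.
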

Note that in the following results we no longer require the condition \eqref{linearFEcond} but rather the weaker condition \eqref{EXPcondition}.
Pairing this exponential with a forward Euler type step still preserves strong stability:
\begin{cor}
Given a  linear operator $L$ that satisfies  \eqref{EXPcondition}
and a (possibly nonlinear) operator $N(u)$ that satisfies \eqref{nonlinearFEcond}  
for some value of ${\Delta t}_{FE} > 0 $, we have 
\begin{equation}
\| e^{\tau L} ( u^n  + \dt N (u^n )) \|  \leq \| u^n \|   \; \; \; \; \forall \dt \leq \DtFE,  \; \; \; \mbox{provided that} \; \; \tau \geq 0. 
\end{equation}
\end{cor}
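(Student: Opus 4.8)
The plan is to combine the two hypotheses in the obvious order: first apply the forward Euler property of $N$ to the inner quantity, then apply the exponential stability property of $L$ to the result. Explicitly, since $N$ satisfies \eqref{nonlinearFEcond}, for any $\dt \le \DtFE$ we have $\| u^n + \dt N(u^n)\| \le \|u^n\|$. Set $v = u^n + \dt N(u^n)$. Then $v$ is just some vector, and since $\tau \ge 0$ Theorem \ref{thm:exp_FEcond} (in the form \eqref{EXPcondition}, applied with $v$ in place of $u^n$) gives $\|e^{\tau L} v\| \le \|v\|$. Chaining these two inequalities yields
\[
\| e^{\tau L}( u^n + \dt N(u^n))\| = \|e^{\tau L} v\| \le \|v\| = \| u^n + \dt N(u^n)\| \le \|u^n\|,
\]
which is exactly the claimed bound.

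The key steps, in order, are: (i) introduce the auxiliary vector $v = u^n + \dt N(u^n)$; (ii) invoke \eqref{nonlinearFEcond} to bound $\|v\| \le \|u^n\|$ under the restriction $\dt \le \DtFE$; (iii) invoke the conclusion \eqref{EXPcondition} of Theorem \ref{thm:exp_FEcond}, which holds for \emph{every} argument and every $\tau \ge 0$, applied to the specific argument $v$, to get $\|e^{\tau L} v\| \le \|v\|$; (iv) concatenate. The one subtlety worth flagging is that \eqref{EXPcondition} must be read as a statement quantified over all vectors, not just over iterates of a particular scheme — this is why the remark in the excerpt emphasizes that we now work with \eqref{EXPcondition} rather than the original forward-Euler form \eqref{linearFEcond}. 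Once that is understood, the argument is purely a two-line composition of contractions and there is no real obstacle; the proof is essentially immediate from the stated results.

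I would also note explicitly that no further smallness condition on $\tau$ is needed: the only time-step restriction is the one coming from the nonlinear part, $\dt \le \DtFE$, while the exponential factor contributes stability for arbitrarily large $\tau \ge 0$. This is precisely the structural feature that makes the integrating factor approach attractive, and it is worth stating in the proof so the reader sees where the hypothesis $\tau \ge 0$ (and nothing more) is used.
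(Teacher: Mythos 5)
Your proof is correct and follows essentially the same route as the paper's (the proof of this corollary is given in \cite{SSPIFRK-SINUM}): bound $\| u^n + \dt N(u^n)\| \leq \|u^n\|$ by \eqref{nonlinearFEcond} for $\dt \leq \DtFE$, then apply the contractivity \eqref{EXPcondition} of $e^{\tau L}$ for $\tau \geq 0$ to that vector and chain the inequalities. Your remark that \eqref{EXPcondition} is used as a statement about arbitrary arguments, not just the iterate $u^n$, is exactly the right reading and matches the paper's intent.
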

Finally, we put the pieces together:
\begin{theorem} \label{thm:SSPIF} 
Given a  linear operator $L$ that satisfies \eqref{EXPcondition}
and a (possibly nonlinear) operator $N(u)$ that satisfies \eqref{nonlinearFEcond}  
for some value of ${\Delta t}_{FE} > 0 $, and a Runge--Kutta integrating factor method of the form
\begin{eqnarray} \label{rkIFSO}
u^{(0)} & =  & u^n, \nonumber \\
u^{(i)} & = & \sum_{j=0}^{i-1} e^{L (c_i-c_j) \dt }  \left( \aij u^{(j)} + \dt \bij N(u^{(j)}) \right), \; \; \; \; i=1, . . ., s\\
 u^{n+1} & = & u^{(s)}  \nonumber
\end{eqnarray}
where $0=c_1 \leq c_2 \leq  . . . \leq c_s$, then $u^{n+1}$ obtained from \eqref{rkIFSO} satisfies
\begin{equation}
\|u^{n+1}\| \leq \|u^n\| \; \; \; \forall \dt \leq \sspcoef \DtFE.
\end{equation}
where 
\begin{equation*}
\sspcoef  =  \min_{i,j} \frac{\aij}{\bij}.
\end{equation*}
\end{theorem}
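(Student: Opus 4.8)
The plan is to argue by induction on the stage index, reproducing the convex-combination argument used for explicit SSP Runge--Kutta methods in Subsection~\ref{SSPintro}, but with each forward-Euler building block replaced by the exponentially weighted forward-Euler step handled by the corollary above. Set $\tau_{ij} := (c_i - c_j)\dt$. We will show $\|u^{(i)}\| \le \|u^n\|$ for every $i$; the base case is trivial since $u^{(0)} = u^n$, and taking $i = s$ at the end gives $\|u^{n+1}\| \le \|u^n\|$. Throughout we use the SSP sign conditions on the Shu--Osher coefficients ($\aij \ge 0$, $\bij \ge 0$, with $\aij = 0$ only if $\bij = 0$), which is exactly what makes $\sspcoef = \min_{i,j}\aij/\bij$ positive; if $\sspcoef = 0$ the claim is vacuous since then only $\dt = 0$ is admissible.

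For the inductive step, assume $\|u^{(j)}\| \le \|u^n\|$ for all $j < i$. Because only indices $j < i$ appear in \eqref{rkIFSO} and the abscissas satisfy $0 = c_1 \le c_2 \le \cdots \le c_s$, every exponent obeys $\tau_{ij} = (c_i - c_j)\dt \ge 0$, so Theorem~\ref{thm:exp_FEcond} applies to each $e^{L\tau_{ij}}$. Rewriting the stage as
\begin{equation*}
u^{(i)} = \sum_{j=0}^{i-1} e^{L\tau_{ij}}\left(\aij u^{(j)} + \dt\bij N(u^{(j)})\right)
= \sum_{j=0}^{i-1} \aij\, e^{L\tau_{ij}}\!\left(u^{(j)} + \dt\,\frac{\bij}{\aij}\, N(u^{(j)})\right),
\end{equation*}
(dropping any term with $\aij = 0$, which is legitimate since then $\bij = 0$ as well), the corollary above gives, for each $j$,
\begin{equation*}
\left\| e^{L\tau_{ij}}\!\left(u^{(j)} + \dt\,\frac{\bij}{\aij}\, N(u^{(j)})\right)\right\| \le \|u^{(j)}\|
\qquad \text{provided} \qquad \dt\,\frac{\bij}{\aij} \le \DtFE ,
\end{equation*}
i.e. provided $\dt \le \frac{\aij}{\bij}\DtFE$ (with the convention that this ratio is infinite when $\bij = 0$).

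Combining this with the consistency condition $\sum_{j=0}^{i-1}\aij = 1$, the non-negativity of the $\aij$, the triangle inequality, and the inductive hypothesis gives
\begin{equation*}
\|u^{(i)}\| \le \sum_{j=0}^{i-1}\aij \left\| e^{L\tau_{ij}}\!\left(u^{(j)} + \dt\,\frac{\bij}{\aij}\, N(u^{(j)})\right)\right\| \le \sum_{j=0}^{i-1}\aij \|u^{(j)}\| \le \|u^n\| ,
\end{equation*}
valid as long as $\dt \le \frac{\aij}{\bij}\DtFE$ for every pair $(i,j)$ occurring at that stage. Imposing this simultaneously over all stages collapses to the single restriction $\dt \le \big(\min_{i,j}\frac{\aij}{\bij}\big)\DtFE = \sspcoef\DtFE$, which is the claimed bound.

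I expect the only delicate point to be the bookkeeping around vanishing coefficients and the $\aij/\bij = \infty$ convention; the rest is a direct transcription of the explicit SSP argument. The essential role of the hypothesis is the non-decreasing abscissa condition, which is precisely what guarantees $\tau_{ij} \ge 0$ so that Theorem~\ref{thm:exp_FEcond}, and hence the corollary, can be invoked on every term — without it the exponentials could amplify and the chain of inequalities would break, exactly as the motivating example in Subsection~\ref{motivating} illustrates. Note also that only property \eqref{EXPcondition} of $L$ is used, consistent with the remark preceding the theorem.
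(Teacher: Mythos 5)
Your proposal is correct and follows essentially the same route as the paper's argument (the proof deferred to \cite{SSPIFRK-SINUM}, and mirrored in the proof of Theorem~\ref{thm:SSPIF-TSRK} here): decompose each stage into a convex combination of exponentially weighted forward-Euler steps, use the non-decreasing abscissas so that every exponent $(c_i-c_j)\dt$ is non-negative, invoke Theorem~\ref{thm:exp_FEcond} and its corollary on each term, and conclude via the triangle inequality and consistency under $\dt \le \sspcoef\DtFE$. Your extra bookkeeping for vanishing $\aij$, $\bij$ matches the paper's stated convention and introduces no discrepancy.
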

In \cite{SSPIFRK-SINUM} we used these results to motivate the search for explicit SSP Runge--Kutta methods with non-decreasing abscissas,
that would pair well with an integrating factor approach to produce SSPIFRK methods.
We found explicit SSP Runge--Kutta methods of up to $s=10$ stages and  order $p=4$
and demonstrated the performance of the corresponding explicit SSP integrating factor Runge--Kutta 
methods on widely used test cases.


In \cite{SSPIFRK-downwind} we show that it is not absolutely necessary to have non-decreasing abscissas: even if there are some decreasing abscissas 
one can  preserve the strong stability property by replacing the operator $L$ by an operator
$\tilde{L}$ that satisfies  \[\| e^{-\tau \tilde{L}} u^n \|  \leq \| u^n \|   \; \; \; \; \forall \; \tau \geq 0 , \]
whenever the difference of abscissas  $c_i-c_j$ is negative.
For hyperbolic partial differential equations, $\tilde{L}$ is simply the spatial discretization that is stable for the backwards in time version of the equation.
This approach is  employed  in the classical SSP literature, called downwinding. In that case, negative coefficients $\bij$ 
preserve the SSP property as long as the operator is replaced by a downwinded operator \cite{gottliebshu1998,gottlieb2001, ketcheson2011}.

\section{A review of explicit SSP two-step Runge--Kutta methods} \label{sec:background}



The order barrier of $p\leq 4$ on explicit SSP Runge--Kutta methods and the
need for higher order explicit methods in simulations led to the investigation of
the SSP properties of explicit two-step Runge--Kutta methods (TSRK) \cite{tsrk}, and later of 
explicit multi-step Runge--Kutta methods (MSRK) \cite{msrk}.
In \cite{tsrk} it was shown that explicit SSP TSRK  methods 
do not use values of stages from the previous steps, so that they can be written as:
\begin{subequations} \label{eq:mrktypeII}
\begin{align} 
y_1^n & =  u^n  \nonumber \\
y_i^n & =   d_{i} u^{n-1} + (1-d_{i} ) u^{n} + \Dt \hat{a}_{i1} F(u^{n-1}) + \Dt\sum_{j=1}^{i-1} a_{ij} F(y_j^n) \; \; \; \;  2 \leq i \leq s \\
u^{n+1} & =  \theta u^{n} + (1-\theta) u^{n-1}+  \Dt \hat{b}_{1} F(u^{n-1}) + \Dt\sum_{j=1}^s b_j F(y_j^n). \nonumber
\end{align} 
\end{subequations}
An extension of this to multiple steps was studied in \cite{msrk}. 

We can write the method in the matrix form
\begin{align}
\label{spijkerform-compact}
\bw & = \mS\bx + \Dt \mT \vf
\end{align}
where $\mS$ and $\mT$ are the matrices defined by:
\begin{align} \label{msrk-spijker}
\mS & = \begin{pmatrix}1 \ 0   \\ \mD \\\btheta\transpose \end{pmatrix} \ \ \ \
\mT   = \begin{pmatrix}\mzero & \mzero & 0 \\ \mAh & \mA & \mzero \\ \bbh\transpose & \bb\transpose & 0 \end{pmatrix}
\end{align}
and $\bw$, $\bx$, and $\vf$ are defined by:
\begin{align} \label{wxf}
\bw = \begin{pmatrix} u^{n-1}\\y_1^n\\ \vdots \\y^n_s\\u^{n+1} \end{pmatrix} \ \ \ \
\bx = \begin{pmatrix}  u^{n-1} \\ u^{n} \end{pmatrix} \ \ \ \
\vf  = \begin{pmatrix} F\left(u^{n-1}\right)\\ F\left(y^n_1\right)\\ \vdots \\F\left(y^n_s\right)\\ F\left(u^{n+1}\right) \end{pmatrix}
\end{align}
The $\mD$ is a $2\times s$ matrix that contains the elements $\mD_{i,1} = d_i$ and $\mD_{i,2} = 1-d_i$, for $i=1,\dots, s$. The vector $\btheta$ contains the elements $\btheta_1 = \theta$ and $\btheta_2 = 1-\theta$. Note that $\mS \ve = \ve$, where $\ve$ is a vector ones of the proper length.

\subsection{Formulating the optimization problem} \label{sec:optimization}
In \cite{tsrk} a SSP optimization problem was defined for TSRK methods. The derivation of the optimization problem begins with the matrix form \eqref{spijkerform-compact} of the TSRK:
\[\bw  = \mS\bx + \Dt \mT \vf \]
We then add  $r \mT \bw$ to each side of the method
\[  \left( \mI +r \mT   \right)  \bw = 
\mS\bx  + r \mT \left( \bw + \frac{\Delta t}{r}  \vf \right) \]
 and rearrange to obtain
\[ \bw  =  \mR \bx + \mP \left( \bw + \frac{\Delta t}{r}  \vf \right) .\]
Where $\mP  =  r \left( \mI +r \mT   \right)^{-1}\mT $ and   $\mR  =  \left( \mI +r \mT   \right)^{-1}\mS=\left(\mI-\mP\right)\mS$. 
Note that the row sums of $[\mR ~ \mP]$ are each equal to one
\begin{eqnarray*}
\mR\ve+\mP\ve=\left(\mI-\mP\right)\mS\ve+\mP\ve=\mI\mS\ve-\mP\mS\ve+\mP\ve=\ve-\mP\ve+\mP\ve=\ve .
\end{eqnarray*}

It is easy to see that if $\mR$ and $\mP$ have no negative values, then  the method is a convex combination of forward Euler steps, and so is SSP with SSP coefficient defined by 
\begin{align*}
\sspcoef(\mS,\mT) & = \sup_{r}\left\{r : (\mI+r \mT)^{-1} \mbox{ exists and }  \mP \ge 0, \mR \ge 0
\right\}.
\end{align*}

This leads directly to the optimization problem: We wish to find coefficients $\mT$ and $\mS$ for explicit SSP two-step Runge--Kutta methods that 
maximize the value of $r$ while satisfying
\begin{subequations} \label{optimization}
\begin{align}
 \left( \mI +r \mT  \right)^{-1} \mS   \geq 0    \label{eq:constraints1}  \\
r  \left( \mI +r \mT  \right)^{-1} \mT \geq 0   \label{eq:constraints2}  \\
 \tau_\rho= 0 \; \; \; \mbox{for} \; \; \;  \rho=1, . . ., p   \label{eq:constraints3}. 
 \end{align}
 \end{subequations}
This requires that $\left( \mI +r \mT  \right)^{-1}$ exists. The inequalities \eqref{eq:constraints1} and \eqref{eq:constraints2} are understood componentwise. 
The $\tau_\rho$ in \eqref{eq:constraints3} are the order conditions listed in Appendix \ref{orderconditions}.

\subsection{Optimized SSP two-step Runge--Kutta methods}
In \cite{tsrk} a SSP optimization problem was defined for the TSRK methods, and optimized explicit TSRK methods of up to eighth order were found numerically.  The SSP coefficients $\sspcoef$ of these methods up to seventh order are in Table \ref{tab:SSPcoef}. Of course, a method with more stages will have a higher SSP coefficient, but also require more computations. To account for this we define the {\em effective SSP coefficient}  $\ceff = \frac{1}{s} \sspcoef$, which is the SSP coefficient normalized by the number of stages, in Table \ref{tab:effSSPcoef}. An eleven stage eighth order method has an SSP cefficient $\sspcoef=0.341$ and an effective SSP Coefficient $\ceff=0.031$.

These methods break the fourth-order barrier of (one-step) SSP Runge--Kutta methods and have significantly larger SSP coefficients than the corresponding order multi-step methods. Numerical tests in \cite{tsrk} showed that these high order SSP two-step Runge-Kutta methods are beneficial for preserving the order
and strong stability properties when used with high order spatial discretizations 
for the  time integration of a variety of hyperbolic PDEs.
 It was proved in \cite{tsrk} that explicit SSP TSRK methods have order at most eight.  
 In \cite{msrk} it was shown that adding more steps overcomes this order barrier as well.

\begin{table}[h]
\centering
\setlength\tabcolsep{4pt}
\centering
\begin{tabular}{|c|llllll|} \hline
  \diagbox{s}{p}  &  2 & 3 & 4 & 5 & 6 & 7\\
 \hline
    1  &  -           &  -           &   -		&   -		&   -		&   -\\   
    2  &  1.4142 &  0.7320 &   -		&   - 		&   - 		&   -\\    
    3  &  2.4495 &  1.6506 &   0.8588  &   -		&   - 		&   -\\
    4  &  3.4641 &  2.3027 &   1.5926  &   0.8542 &  - 		&   -\\
    5  &  4.4721 &  2.9879 &   2.3605 &   1.6481 &   - 		&   -\\
    6  &  5.4772 &  3.7768 &   3.0559 &   2.3093 &   0.5957 &   -\\
    7  &  6.4807 &  4.4836 &   3.7405 &   2.9278 &   1.2719 &   -\\
    8  &  7.4833 &  5.2227 &   4.4921 &   3.5794 &   1.9384 &  0.5666 \\
    9  &  8.4853  &  6.0498 &   5.2705 &   3.9415 &   2.5826 & 1.1199  \\
   10 &  9.4868  &  6.8274 &   6.1039 &   4.2544 &   3.1992 & 1.7857 \\
\hline
\end{tabular}
\caption{SSP coefficients of the optimized eSSP-TSRK(s,p) methods \cite{tsrk}.}
\label{tab:SSPcoef} 
 \end{table}
\begin{table}[h]
\centering
\setlength\tabcolsep{4pt}
\centering
\begin{tabular}{|c|llllll|} \hline
  \diagbox{s}{p}  &  2 & 3 & 4 & 5 & 6 & 7\\
 \hline
    1  &  -           &  -           &   -		&   -		& - 	       &   -\\   
    2  &  0.7071 &  0.3660 &   -		&  -		& - 	       &   -\\    
    3  &  0.8165 &  0.5502 &   0.2863 &   - 		& - 	       &   -\\
    4  &  0.8660 &  0.5757 &   0.3982	&  0.2135 & - 	       &   -\\
    5  &  0.8944 &  0.5976 &   0.4721 &   0.3296 & -	       &   -\\
    6  &  0.9129 &  0.6295 &   0.5093 &   0.3849 & 0.0993 &   -\\
    7  &  0.9258 &  0.6405 &   0.5343 &   0.4183 & 0.1817 &   -\\
    8  &  0.9354 &  0.6528 &   0.5615 &   0.4474 & 0.2423 &  0.0708 \\
    9  &  0.9428 &  0.6722 &   0.5856 &   0.4379 & 0.2869 &  0.1244\\
   10 &  0.9487 &  0.6827 &   0.6104 &   0.4254 & 0.3199 &  0.1786 \\
\hline
\end{tabular}
\caption{Effective SSP coefficients of the optimized eSSP-TSRK(s,p) methods \cite{tsrk}.}
 \label{tab:effSSPcoef} 
\end{table}

\section{Explicit SSP two-step Runge--Kutta schemes for use with integrating factor methods}
 \label{sec:SSPIF} 

In this section we consider, as we did in \cite{SSPIFRK-SINUM,SSPIFRK-downwind}, a problem of the form
\eqref{IFproblemform}
with a linear constant coefficient component $L u $ 
that satisfies the strong stability condition \eqref{linearFEcond} and a nonlinear component $N(u)$ that satisfies the strong stability condition \eqref{nonlinearFEcond}.
When $ \tDtFE <<  \DtFE$  using an explicit SSP Runge--Kutta method
will result in severe constraints on the allowable time-step that is not alleviated by using 
 an implicit  or an  implicit-explicit (IMEX) SSP Runge--Kutta method  \cite{lenferink1991,ketcheson2009,ferracina2008,SSPbook2011,SSPIMEX}.
This is in contrast to the case with linear $L_2$ stability, where using an implicit or an IMEX method
may completely alleviate the time-step restriction coming from the stiff component.

To alleviate the restriction on the allowable time-step we can solve the linear part exactly by an integrating factor approach as in \eqref{IF_ODE}.
This new ODE can be evolved forward in time using standard methods.
As described in Section \ref{sec:Review-IFRK}, in \cite{SSPIFRK-SINUM} we considered stepping the transformed problem \eqref{IF_ODE} 
forward using an explicit SSP Runge--Kutta method of the form \eqref{rkSO} and found that the result was SSP provided that 
the method had non-decreasing abscissas. In the following, we consider extending the SSP integrating factor approach by 
using multi-step and two-step Runge--Kutta integrating factor methods to obtain order $p\geq4$

\subsection{SSP integrating factor linear multi-step methods} \label{sec:multistepif}
It can be  easily shown that if we use an explicit SSP multi-step method
\begin{eqnarray} \label{SSPms}
u^{n+1} & = &  \sum_{l=1}^{k}  \left( \alpha_l u^{n-k+l} + \dt \beta_l  F(u^{n-k+l}) \right),
\end{eqnarray}
which satisfies 
 \[   u^{n+1}  \leq \max_{l=1, . . ., k} \left\{ \|u^{n-k+l} \| \right\}. \]
with SSP coefficient $\sspcoef= \max_{l} \frac{\alpha_l}{\beta_l}$, 
 the result is SSP as well, with the same SSP coefficient:

\begin{theorem} \label{thm:SSPIF-MS}
Given a  linear operator $L$ that satisfies   \eqref{EXPcondition}
and a (possibly nonlinear) operator $N(u)$ that satisfies \eqref{nonlinearFEcond}  
for some value of ${\Delta t}_{FE} > 0 $, and a $k$-step explicit SSP integrating factor multi-step method of the form
\begin{eqnarray} \label{msIFSO}
 u^{n+1} & = &  \sum_{l=1}^{k} e^{L (k -l +1 )} \left( \alpha_l u^{n-k+l} + \dt \beta_l  N(u^{n-k+l}) \right),
\end{eqnarray}
 then the numerical solution satisfies
\begin{equation}
\|u^{n+1}\| \leq  \max_{l=1, . . ., k} \left\{ \|u^{n-k+l} \| \right\}.  \; \; \; \forall \dt \leq \sspcoef \DtFE.
\end{equation}
where  $\sspcoef= \max_{l} \frac{\alpha_l}{\beta_l}$. 
\end{theorem}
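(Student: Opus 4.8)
The plan is to reproduce, in the simpler multi-step setting, the argument behind Theorem~\ref{thm:SSPIF}: write the single update \eqref{msIFSO} as a convex combination of terms, each of which is an exponential operator $e^{\tau L}$ with $\tau \ge 0$ applied to a forward-Euler-type step, and then invoke the corollary that follows Theorem~\ref{thm:exp_FEcond}. First I would make explicit the standing assumptions on the underlying explicit SSP multi-step method \eqref{SSPms}: its coefficients satisfy $\alpha_l \ge 0$, $\beta_l \ge 0$, the consistency relation $\sum_{l=1}^{k}\alpha_l = 1$, and $\alpha_l = 0$ only when $\beta_l = 0$, so that for each $l$ with $\alpha_l > 0$ one may write $\alpha_l u^{n-k+l} + \dt\,\beta_l N(u^{n-k+l}) = \alpha_l\bigl(u^{n-k+l} + \frac{\dt\,\beta_l}{\alpha_l}N(u^{n-k+l})\bigr)$, a forward Euler step with effective step size $\dt\,\beta_l/\alpha_l$; the terms with $\alpha_l = 0$ (hence $\beta_l = 0$) contribute nothing.

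The only place where the multi-step structure actually matters is the sign of the exponent. The operator multiplying the $l$-th term in \eqref{msIFSO} is $e^{L(k-l+1)\dt}$, and $k-l+1 \ge 1 > 0$ for every $l \in \{1,\dots,k\}$; this nonnegativity is the multi-step analogue of the non-decreasing-abscissa condition, and here it is automatic, because an explicit multi-step method references only solution values from the current step and earlier (equivalently $t_{n+1} - t_{n-k+l} = (k-l+1)\dt \ge 0$). Hence, provided $\dt\,\beta_l/\alpha_l \le \DtFE$, the corollary following Theorem~\ref{thm:exp_FEcond}, applied with $\tau = (k-l+1)\dt$ and using \eqref{nonlinearFEcond} together with the exponential estimate \eqref{EXPcondition}, gives $\bigl\| e^{L(k-l+1)\dt}\bigl(u^{n-k+l} + \frac{\dt\,\beta_l}{\alpha_l}N(u^{n-k+l})\bigr)\bigr\| \le \|u^{n-k+l}\|$. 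Summing over $l$ and using that $\|\cdot\|$ is a convex functional with $\sum_l \alpha_l = 1$ then yields
\[ \|u^{n+1}\| \le \sum_{l=1}^{k}\alpha_l\,\|u^{n-k+l}\| \le \max_{l=1,\dots,k}\|u^{n-k+l}\|. \]
Finally, the per-term requirement $\dt\,\beta_l/\alpha_l \le \DtFE$ holds for all $l$ simultaneously exactly under the time-step restriction $\dt \le \sspcoef\DtFE$ carried by the underlying SSP multi-step method \eqref{SSPms} (with the usual convention that $\alpha_l/\beta_l$ is infinite when $\beta_l = 0$), which is the claimed bound.

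I do not expect a genuine obstacle here: the statement is morally a special case of Theorem~\ref{thm:SSPIF} but strictly easier, since the nonnegativity of the exponents comes for free rather than being an imposed constraint. The points that need care are the bookkeeping for the degenerate $\beta_l = 0$ terms and the infinite-ratio convention, and a clean verification that applying \eqref{SSPms} to the transformed ODE \eqref{IF_ODE} and undoing the change of variables $w = e^{-Lt}u$ really does produce \eqref{msIFSO} with exponent $k-l+1$ and not some other shift; that computation is where the factor $(k-l+1)\dt = t_{n+1} - t_{n-k+l}$ is pinned down. As in the Runge--Kutta case, it is worth remarking that only the weaker hypothesis \eqref{EXPcondition} on $L$ is actually used, not \eqref{linearFEcond}.
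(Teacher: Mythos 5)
Your proposal is correct and follows essentially the same route as the paper's proof: apply the convexity/sub-additivity of $\|\cdot\|$ to split the sum, factor out $\alpha_l$ to expose forward-Euler-type steps with effective step $\dt\,\beta_l/\alpha_l$, invoke the corollary to Theorem~\ref{thm:exp_FEcond} (the exponents $(k-l+1)\dt$ being automatically nonnegative), and finish with the consistency condition $\sum_l \alpha_l = 1$ to bound by the maximum of the previous step norms. Your extra bookkeeping for the $\alpha_l=0,\ \beta_l=0$ terms and the observation that only \eqref{EXPcondition}, not \eqref{linearFEcond}, is used are consistent with the paper, which handles these points implicitly.
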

 Notice that for multi-step methods the notion of SSP includes all the previous steps considered.
\begin{proof}
We write the method in the form
\begin{eqnarray*}
\| u^{n+1}\|  & = & \|  \sum_{l=1}^{k} e^{L (k -l +1 )} \left( \alpha_l u^{n-k+l} + \dt \beta_l  N(u^{n-k+l}) \right) \| \\
& \leq &   \sum_{l=1}^{k} \| e^{L (k -l +1 )} \left( \alpha_l u^{n-k+l} + \dt \beta_l  N(u^{n-k+l}) \right) \| \\
& \leq &  \sum_{l=1}^{k} \alpha_l  \| e^{L (k -l +1 ) } \left(  u^{n-k+l} + \dt \frac{\beta_l}{\alpha_l}  N(u^{n-k+l}) \right) \| \\
& \leq &  \sum_{l=1}^{k} \alpha_l \|u^{n-k+l} \|.
\end{eqnarray*}
the last inequality holds provided that  $\dt \frac{\beta_l}{\alpha_l}  \leq \DtFE$. 
Using the fact that by consistency we have $\sum_{l=1}^{k} \alpha_l =1$, we can conclude that 
\[ \| u^{n+1}\|  \leq \max_{l=1, . . ., k} \left\{ \|u^{n-k+l}\| \right\}. \]
\end{proof}
However, SSP multi-step methods require many steps for high order and have small SSP coefficients as shown in \cite{SSPbook2011}.
Now that we see that building explicit SSP multi-step integrating factor methods is trivial, 
we are ready to approach the main question of this paper: is it possible to extend our results in
\cite{SSPIFRK-SINUM} to two-step Runge--Kutta methods?

\subsection{SSP integrating factor two-step Runge--Kutta methods}  
Any explicit SSP TSRK  method of the form \eqref{eq:mrktypeII} that is 
SSP with SSP coefficient $\sspcoef=r$  can be re-written in the matrix-vector form \eqref{spijkerform-compact},
which can also be represented as
\begin{subequations} \label{eq:SOtsrk}
\begin{align}
y_1 & =  u^n  \nonumber \\
y_i & =  v_{i,1} u^{n-1} + v_{i,2} u^{n} +  \hat{\alpha}_i \left( u^{n-1} + \frac{\dt}{r}  F(u^{n-1} ) \right)\\
& + \sum_{j=1}^{i-1} \aij  \left(  y_j + \frac{\dt}{r}  F(y_j ) \right)   \; \; \; \;  2 \leq i \leq s+1 \nonumber \\
u^{n+1} & =  y_{s+1} , \nonumber
\end{align} 
\end{subequations}
 where $0 \leq v_i \leq 1$, $\hat{\alpha}_i \geq 0$, and $\aij \geq 0$.
Here, the consistency condition $\mR\ve+\mP\ve=\ve$ can be written as 
$v_{i,1}  + v_{i,2}  +  \hat{\alpha}_i + \sum_{j=1}^{i-1} \aij   = 1$ for each row $i$.

We note that the time-levels for each stage, also called the abscissas, \[ \vc = \left( c_1, c_2, c_3, . . . , c_s, c_{s+1} \right) = \left(0, c_2, c_3, . . . , c_s, 1\right)\] can be computed (in the matrix form) by $\mD$, $\mA$, and $\mAh$ where $\vc = \mAh\ve+\mA\ve -\mD_{i,1}$. 
The abscissa corresponding to $u^n$ is $0$, the ones corresponding 
to all the internal stages $j=2, . . ., s$ are $c_j$ and the one corresponding to the final stage $u^{n+1}$ is $1$.

Applying \eqref{eq:SOtsrk} to the transformed equation becomes
\begin{subequations} \label{IFmsrk}
\begin{align}
y_1 & =  u^n \nonumber \\
y_i & =  v_{i,1} e^{(c_i +1)\dt } u^{n-1} + v_{i,2}u^{n} +  \hat{\alpha}_i  e^{(c_i +1)\dt } \left( u^{n-1} + \frac{\dt}{r}  N(u^{n-1} ) \right) \\
&+\sum_{j=1}^{i-1} \aij e^{(c_i -c_j )\dt }  \left(  y_j + \frac{\dt}{r}  N(y_j ) \right)   \; \; \; \;  2 \leq i \leq s+1 \nonumber  \\
u^{n+1} & =  y_{s+1} \nonumber \end{align} 
\end{subequations}

The following theorem tells us what conditions an explicit SSP TSRK method \eqref{eq:SOtsrk} 
has to satisfy to preserve the strong stability of a numerical solution when
coupled with an integrating factor approach as in \eqref{IFmsrk}.

\begin{theorem} \label{thm:SSPIF-TSRK}
Given a linear operator $L$ that satisfies   \eqref{EXPcondition}
and a (possibly nonlinear) operator $N(u)$ that satisfies \eqref{nonlinearFEcond}  
for some value of ${\Delta t}_{FE} > 0 $, and an explicit SSP  two-step  Runge--Kutta integrating factor method of the form
\eqref{IFmsrk}  based on the SSP method \eqref{eq:SOtsrk}
with non-decreasing abscissas  $0=c_1 \leq c_2 \leq c_3 ....... \leq c_{s} \leq c_{s+1} = 1$,
then the numerical solution satisfies
\begin{equation}
\|u^{n+1}\| \leq \max \left\{ \|u^{n-1}\| , \|u^n\| \right\}  \; \; \; \forall \dt \leq \sspcoef \DtFE.
\end{equation}
where  the  SSP coefficient $\sspcoef=r$.
\end{theorem}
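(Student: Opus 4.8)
The plan is to mimic the structure of the proof of Theorem~\ref{thm:SSPIF} for one-step methods, working stage by stage and using the consistency condition $v_{i,1}+v_{i,2}+\hat\alpha_i+\sum_{j=1}^{i-1}a_{ij}=1$ together with the non-negativity of all these coefficients. The key observation is that, because the abscissas are non-decreasing, every exponent that appears in \eqref{IFmsrk} is non-negative: the exponents attached to $u^{n-1}$ and the $u^{n-1}$-Euler term are $(c_i+1)\dt \ge \dt \ge 0$, and the exponents $(c_i-c_j)\dt$ with $j\ge 1$, $i>j$ are non-negative precisely because $c_i \ge c_j$. This is exactly where the hypothesis $0=c_1\le c_2\le\dots\le c_{s+1}=1$ is used, and it is the analogue of the observation in Section~\ref{sec:Review-IFRK} that basing the method on an SSP scheme is not enough — the abscissa ordering is what keeps all exponentials strongly stable by Theorem~\ref{thm:exp_FEcond}.

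First I would set up an induction on the stage index $i$, with induction hypothesis $\|y_j\| \le M := \max\{\|u^{n-1}\|,\|u^n\|\}$ for all $j<i$ (the base cases $y_1=u^n$ and the ``stage'' $u^{n-1}$ itself are immediate). For the inductive step, I apply the triangle inequality to \eqref{IFmsrk} to bound $\|y_i\|$ by a sum of terms. The two terms $v_{i,1}e^{(c_i+1)\dt L}u^{n-1}$ and $v_{i,2}u^n$ are handled by Theorem~\ref{thm:exp_FEcond}, giving $v_{i,1}\|u^{n-1}\| \le v_{i,1}M$ and $v_{i,2}\|u^n\|\le v_{i,2}M$. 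The term $\hat\alpha_i e^{(c_i+1)\dt L}(u^{n-1}+\tfrac{\dt}{r}N(u^{n-1}))$ is handled by the Corollary following Theorem~\ref{thm:exp_FEcond}: provided $\tfrac{\dt}{r}\le\DtFE$, i.e. $\dt\le r\DtFE=\sspcoef\DtFE$, we get $\hat\alpha_i\|u^{n-1}\|\le\hat\alpha_i M$. Likewise each term $a_{ij}e^{(c_i-c_j)\dt L}(y_j+\tfrac{\dt}{r}N(y_j))$ is bounded, via the same Corollary and the induction hypothesis, by $a_{ij}\|y_j\|\le a_{ij}M$, again under $\dt\le\sspcoef\DtFE$.

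Summing these bounds and invoking consistency, $\|y_i\| \le (v_{i,1}+v_{i,2}+\hat\alpha_i+\sum_{j=1}^{i-1}a_{ij})M = M$, which closes the induction. Taking $i=s+1$ gives $\|u^{n+1}\|=\|y_{s+1}\|\le M=\max\{\|u^{n-1}\|,\|u^n\|\}$ under the stated time-step restriction $\dt\le\sspcoef\DtFE$. The only genuinely delicate point — and the place I would be most careful — is verifying that \emph{every} exponent arising in \eqref{IFmsrk} is non-negative so that Theorem~\ref{thm:exp_FEcond} and its Corollary apply; in particular the $u^{n-1}$-related exponents are $(c_i+1)\dt$ rather than $c_i\dt$, reflecting that $u^{n-1}$ sits at abscissa $-1$ relative to the current step, and one must check $c_i+1\ge 0$ (trivial since $c_i\ge 0$) and, for the final update, that the exponent attached to $u^{n-1}$ is $(c_{s+1}+1)\dt=2\dt\ge0$. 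Everything else is the routine convex-combination argument already used for SSPIFRK methods, so I do not expect any obstruction beyond bookkeeping the exponents.
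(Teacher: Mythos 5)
Your proposal is correct and follows essentially the same route as the paper's proof: a stage-by-stage triangle-inequality argument, using the non-decreasing abscissas to guarantee every exponent $(c_i-c_j)\dt$ and $(c_i+1)\dt$ is non-negative so that Theorem~\ref{thm:exp_FEcond} and its Corollary apply, and then the consistency (convex-combination) condition to conclude $\|y_i\|\le\max\{\|u^{n-1}\|,\|u^n\|\}$ for $\dt\le r\DtFE$. Your explicit induction on the stage index merely makes precise the bound $\|y_j\|\le\max\{\|u^{n-1}\|,\|u^n\|\}$ that the paper uses implicitly (and states somewhat loosely as $\|u^n\|$) in its displayed chain of inequalities.
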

\begin{proof}
Consider each stage of \eqref{IFmsrk} 
\begin{eqnarray*}
\| y_i \| & = & \| v_{i,1} e^{(c_i +1)\dt } u^{n-1} + v_{i,2} u^{n} +  \hat{\alpha}_i  e^{(c_i +1)\dt } \left( u^{n-1} + 
\frac{\dt}{r}  N(u^{n-1} ) \right)  \\
&+& \sum_{j=1}^{i-1} \aij e^{(c_i -c_j )\dt }  \left(  y_j + \frac{\dt}{r}  N(y_j ) \right)   \| \\
& \leq  &  v_{i,1}  \| e^{(c_i +1)\dt L } u^{n-1} \| + v_{i,2} \| u^{n} \| +  \hat{\alpha}_i  \| e^{(c_i +1)\dt } \left( u^{n-1} +  
 \frac{\dt}{r}  N(u^{n-1} ) \right) \| \\
 &+& \sum_{j=1}^{i-1} \aij \| e^{(c_i -c_j )\dt L }  \left(  y_j + \frac{\dt}{r}  N(y_j ) \right) \|   
\end{eqnarray*}
Now noting that $c_i - c_j \geq 0$ and applying the result in Corollary 1, we obtain
\begin{eqnarray*}
\| y_i \|  & \leq  & v_{i,1} \| u^{n-1} \|  +   v_{i,2} \| u^{n} \|  + \hat{\alpha}_i \|u^{n-1} \|  + \sum_{j=1}^{i-1} \aij \|u^n \| \\
& \leq  & \max \{\| u^n\| , \| u^{n-1} \| \} 
\end{eqnarray*}
for any $ \dt \leq \sspcoef {\Delta t}_{FE} $.
\end{proof}
This theorem shows us that to find optimal SSP two-step Runge--Kutta methods that are suitable for use with the 
integrating factor approach, we must augment the optimization problem with additional conditions reflecting the need for non-decreasing abscissas.

To see how this theorem matters in practice, consider, once again, the motivating example in Subsection \ref{motivating}
 where we solve Equation \eqref{BurgersAdvection}
using an integrating factor approach. In this case, we use $a=2$ so $L u \approx -2 u_x$. Otherwise, everything is the same
as in  Subsection \ref{motivating}. We evolve the transformed problem forward 25 time-steps for various time-steps
using the  eSSP-TSRK$(10,5)$ method in \cite{tsrk} and  a new eSSP-TSRK$^+(10,5)$ method that has
non-decreasing abscissas. The $\log_{10}$ of the maximal rise in total variation at each stage vs. the value $\lambda = \frac{\dt}{\dx}$
is shown in Figure \ref{fig:motivating2}. We observe that the new method preserves the TVD behavior of the 
spatial discretization up to a large $\lambda$, while the eSSP-TSRK$(10,5)$ method in \cite{tsrk} does not preserve the TVD 
property of the spatial discretization.

\begin{figure}
\centering
 \includegraphics[scale=.375]{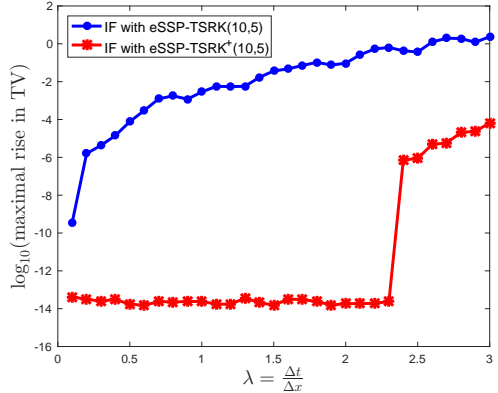} 
    \caption{Total variation behavior of the evolution over 25 time-steps for the motivating example in  Subsection \ref{motivating},
    using the  integrating factor methods based on eSSP-TSRK$^+(10,5)$ (red stars) and eSSP-TSRK$(10,5)$ (blue dots).
}
\label{fig:motivating2}
\end{figure}

\section{Optimal and optimized methods with non-decreasing abscissas\label{sec:optimal}}

We augmented the optimization problem  from \cite{tsrk} described in Subsection \ref{sec:optimization}  by 
imposing the non-decreasing abscissa constraint 
\begin{eqnarray} \label{nnabscissa}
0=c1 \leq c_2 \leq c3 \leq. . . . \leq c_{s-1} \leq c_s \leq c_{s+1} =1
\end{eqnarray}
 in addition  to  \eqref{optimization}.
We then implemented in {\sc Matlab} (as in \cite{ketcheson2008, ketcheson2009a, ketcheson2009}), 
and used to find optimized explicit eSSP-TSRK$^+$ methods of $s \leq 10$ stages and order $p\leq 7$,
as well as an eighth order method  with $s=11$. 
The new methods have non-decreasing abscissas, and are denoted eSSP-TSRK$^+(s,p)$. 
As we saw above, these  methods preserve the SSP properties of a transformed problem \eqref{IF_ODE}
and so can be used as a basis for explicit SSP integrating factor two-step Runge--Kutta (SSPIF-TSRK) methods.

\begin{figure}[ht]
\includegraphics[scale=.3]{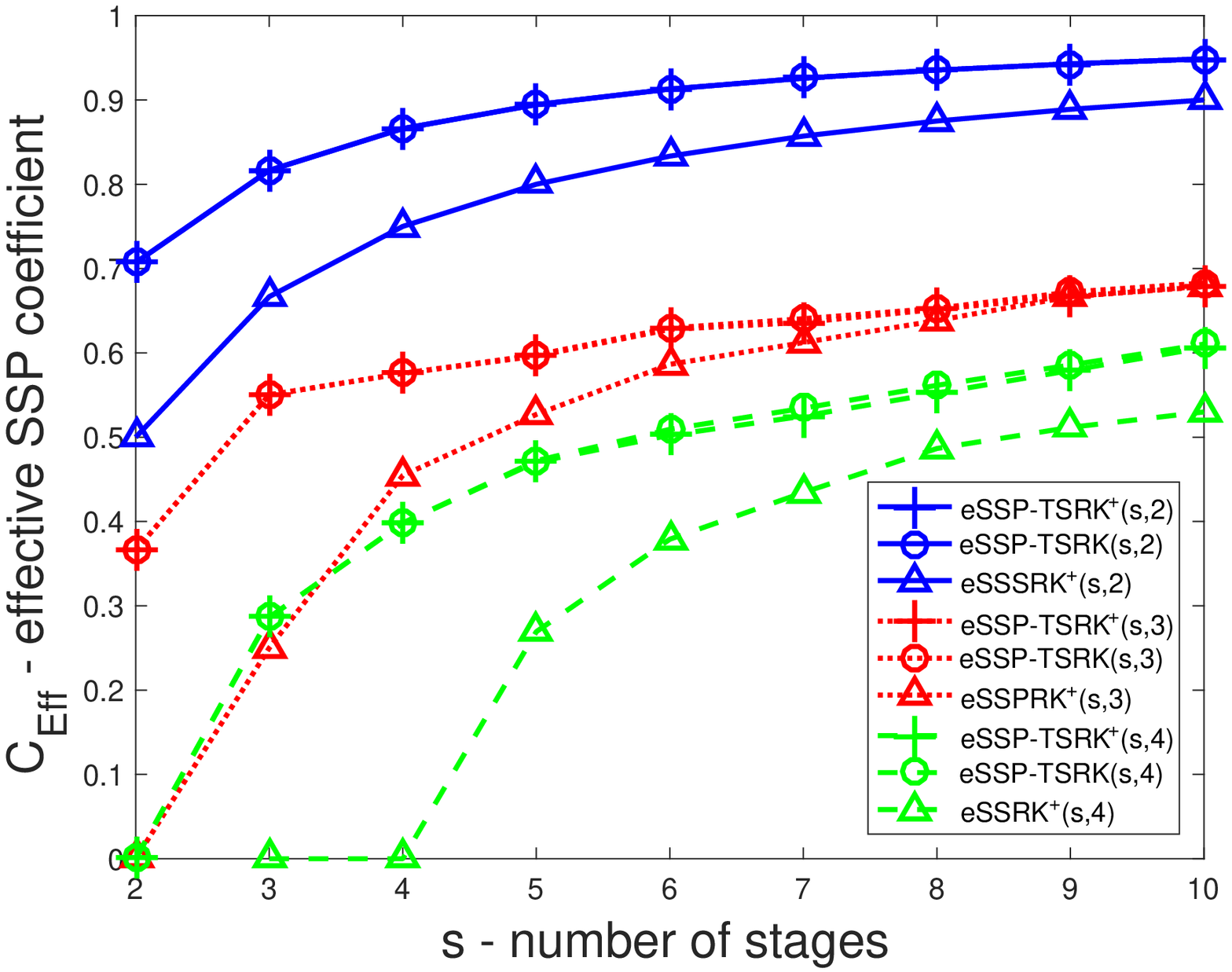} 
\includegraphics[scale=.3]{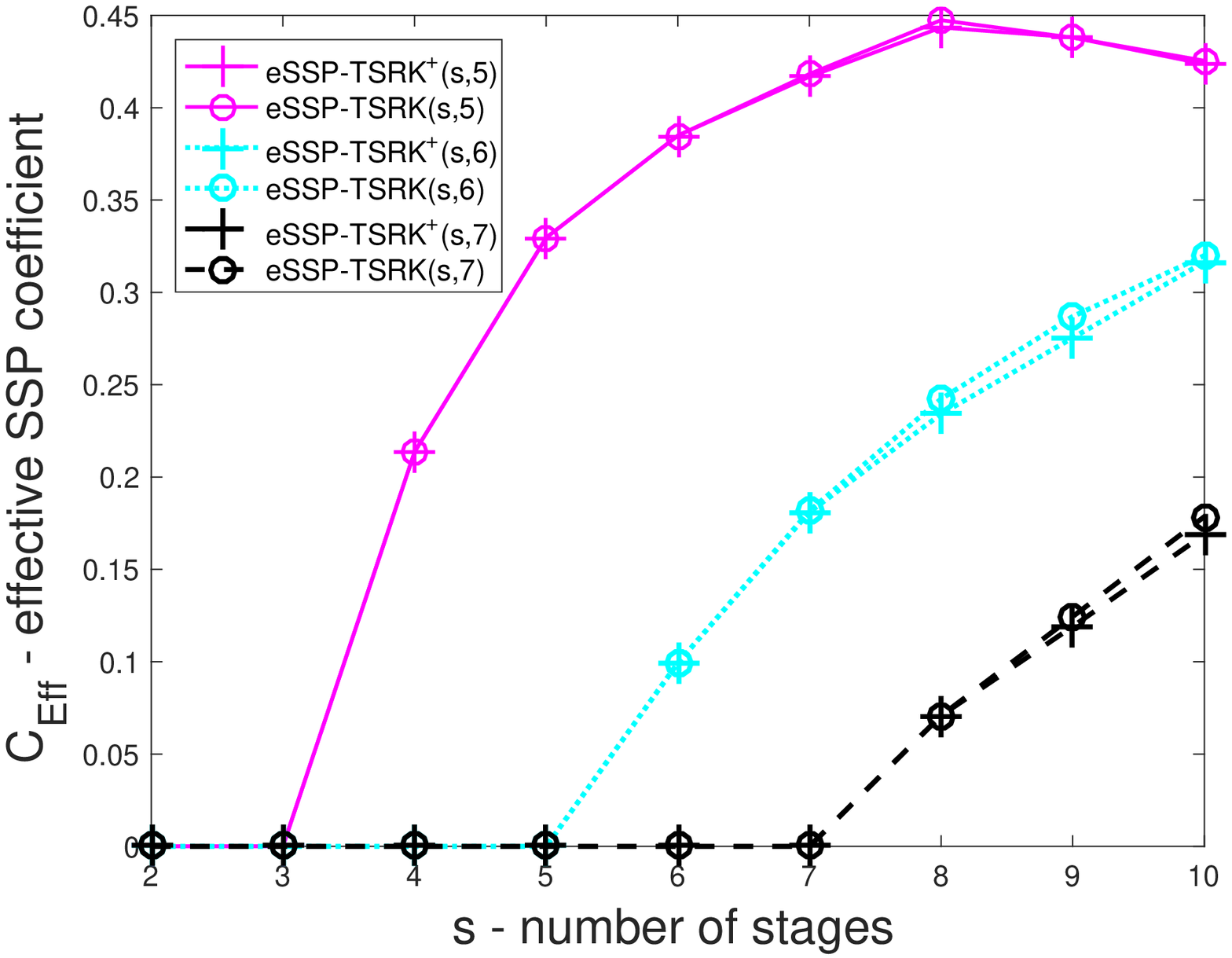}\\ \vspace{-.2in}
\caption{The  effective SSP coefficient  for methods of  order 
 $p=2$ (blue), $p=3$ (red) and $p=4$ (green) on left, and of
 order  $p=5$ (magenta), $p=6$ (cyan) and $p=7$ (black) on right.
 The circles indicate the SSP coefficient of the optimized  eSSP-TSRK methods in \cite{tsrk}
 while the + signs are the SSP coefficients of the optimized  eSSP-TSRK$^+$ methods.
The triangles show the SSP coefficients of the corresponding eSSPRK$^+$ methods (where relevant).}
 \label{fig:SSPcoeff}
\end{figure}

The SSP coefficients and effective SSP coefficients of the optimized methods with non-decreasing abscissas (eSSP-TSRK$^+$) are listed in Tables \ref{tab:SSPcoefIF} and \ref{tab:effSSPcoefIF}. We boldface  the coefficients of the eSSP-TSRK$^+$ methods that are the same as the optimized eSSP-TSRK methods (previously found in \cite{tsrk,msrk})  shown in Tables \ref{tab:SSPcoef} and \ref{tab:effSSPcoef}. 
In Figure \ref{fig:SSPcoeff} we show the effective SSP coefficients of the eSSP-TSRK$^+$ methods and compare them to those of the corresponding eSSP-TSRK methods. For orders $p\leq 4$ we also show the effective SSP coefficients of the eSSPRK$^+$, for comparison. These tables and graphs include all the second order $p=2$ methods, which already have non-decreasing coefficients.
  
 The third order  eSSP-TSRK$^+(s,3)$ methods with $s=2,3,4$ also  have the same effective SSP coefficients as the eSSP-TSRK$(s,3)$ methods in \cite{tsrk}. Although the eSSP-TSRK$(3,3)$ method in \cite{tsrk} has decreasing coefficients, we were able to find an eSSP-TSRK$^+(3,3)$ non-decreasing coefficient method with the same SSP coefficient. The eSSP-TSRK$^+$(s,3) methods with $s>4$ have smaller SSP coefficients than the corresponding eSSP-TSRK methods, but the loss in size of effective SSP coefficient is under $1$\%.   These methods have a significant advantage over the eSSPRK$^+$ methods, but this advantage disappears as we allow a larger number of stages $s$, as can be seen in Figure \ref{fig:SSPcoeff}.

The fourth order eSSP-TSRK$^+(s,4)$  methods with $s=3,4$ also  have the same SSP coefficients as the
corresponding eSSP-TSRK  in \cite{tsrk}.
These two methods are significant because allowing an additional step makes possible fourth order SSP methods with $s=3,4$, which cannot be achieved for  eSSPRK methods.
Once we look at more stages, $s>4$, we see that the eSSP-TSRK$^+(s,4)$ methods suffer some loss
over the eSSP-TSRK$(s,4)$ in the magnitude of the effective SSP coefficient, but it is still under $2$\%,  
so the differences are not  obvious in Figure \ref{fig:SSPcoeff}. Once again, these eSSP-TSRK$^+(s,4)$  
methods offer a significantly larger effective SSP coefficient than the corresponding eSSPRK$^+$ methods. 

The fifth order eSSP-TSRK$^+(s,5)$  methods with $s=4,5,6,9$ also have the same SSP coefficients as the 
corresponding eSSP-TSRK methods  in \cite{tsrk}. We note that our optimization code found a slightly better 
eSSP-TSRK$^+(9,5)$ than the eSSP-TSRK$(9,5)$ previously found in the literature.
For $s=7,8,10$ the  eSSP-TSRK$^+(s,5)$ cannot match the SSP coefficients of the  eSSP-TSRK$(s,5)$ methods in \cite{tsrk}. The loss that we see  in the effective SSP coefficients is, once again, under $1$\%, so the difference cannot be seen in Figure \ref{fig:SSPcoeff}.  It is important to note that there are no explicit SSP Runge--Kutta methods of fifth order or above, so the eSSP-TSRK methods are particularly useful if one needs to go to higher order.

The sixth order eSSP-TSRK$^+(s,6)$ methods we found all have smaller SSP coefficients than the 
eSSP-TSRK$(s,6)$ in \cite{tsrk}.  For $s\leq 7$ the loss is under $0.5$\% which cannot be seen in Figure  \ref{fig:SSPcoeff}. However, once we get to $s=8,9,10$ we see more significant losses 
in the size of the SSP coefficients, however these are still below $5$\%. It is interesting to notice that the 
explicit eSSP-TSRK$^+(10,6)$ method has an effective SSP coefficient $\ceff \approx 0.31$ that is about half that of the eSSPRK$(10,4)$ (which is $\ceff =0.6$) and about $60$\% of that of the eSSPRK$^+(10,4)$
(which is $\ceff \approx 0.5299$).

The  eSSP-TSRK$^+(8,7)$   method we found has the same SSP coefficient as the eSSP-TSRK$(8,7)$ in \cite{tsrk}, while the eSSP-TSRK$^+(s,7)$ methods with $s=9,10$ have a loss in the effective SSP coefficient of $4.4$\% and $5.55$\% compared with the corresponding  eSSP-TSRK methods in \cite{tsrk}.
 Although it is possible to find eighth order methods, these require more than ten stages. Using our optimization code, we found an eleven stage eighth order method with non-decreasing abscissas  eSSP-TSRK$^+$(11,8)
with an effective SSP coefficient $\ceff \approx 0.0249$. The SSP coefficient of the eSSP-TSRK$^+(11,8)$ methods is significantly smaller (by close to 20\%) than the eleven stage eighth order method eSSP-TSRK$(11,8)$  in \cite{tsrk}  which has $\ceff \approx 0.031$.
  
In general, we see that for lower number of stages, the SSP coefficients of the methods with non-decreasing abscissas are no smaller than the methods with decreasing abscissas. Perhaps this reflects the fact that the solution space for methods with $s$ close to $p$ is already limited, and so the additional constraint of non-decreasing abscissas poses no major restriction. However, it is interesting to notice that despite the constrained solution space when $s$ is close to $p$, in every case where an eSSP-TSRK$(s,p)$ method was found 
it was also possible to find an eSSP-TSRK$^+(s,p)$ method.
  
%

\begin{table}[h]
\centering
\setlength\tabcolsep{4pt}
\centering
\begin{tabular}{|c|llllll|} \hline
  \diagbox{s}{p}  &  2 & 3 & 4 & 5 & 6 & 7\\
 \hline
    1  &  -           		&  -           &   -			&   -		&   -		&   -\\   
    2  &  {\bf 1.4142} &  {\bf 0.7320} &   -		&   - 			&   - 		&   -\\    
    3  &  {\bf 2.4495} &  {\bf 1.6506}	 & {\bf 0.8588}  &   -		&   - 		&   -\\
    4  &  {\bf 3.4641} &  {\bf 2.3027} & {\bf 1.5926}  & {\bf 0.8542} &  - 		&   -\\
    5  &  {\bf 4.4721} &  2.9807 	&   2.3523 	&   {\bf 1.6481} 	&   - 		&   -\\
    6  &  {\bf 5.4772} &  3.7672 	&   3.0140		&   {\bf 2.3093} 	&   0.5958 &   -\\
    7  &  {\bf 6.4807} &  4.4533 	&   3.6751		&   2.9173 	&   1.2671 &   -\\
    8  &  {\bf 7.4833} &  5.2134 	&   4.4178 	&   3.5477 	&   1.8728 & {\bf 0.5666}\\
    9  &  {\bf 8.4853} &  6.0012 	&   5.2120 	&   {\bf 3.9426} &   2.4784 & 1.0715  \\
   10 &  {\bf 9.4868} &  6.7916 	&   6.0626 	&   4.2362 	&   3.1646 & 1.6892 \\
\hline
\end{tabular}
\caption{SSP coefficients of the optimized eSSP-TSRK$^+$(s,p) methods.}
\label{tab:SSPcoefIF} 
\end{table}
  \begin{table}[h!]
\centering
\setlength\tabcolsep{4pt}
\centering
\begin{tabular}{|c|llllll|} \hline
  \diagbox{s}{p}  &  2 & 3 & 4 & 5 & 6 & 7\\
 \hline
    1  &  -           		&  -           		&   -		&   -		& - 	       &   -\\   
    2  &  {\bf 0.7071} &  {\bf 0.3660} &   -		&  -		& - 	       &   -\\    
    3  &  {\bf 0.8165} & {\bf 0.5502}	 &   {\bf 0.2863} &   - 		& - 	       &   -\\
    4  &  {\bf 0.8660} &  {\bf 0.5757} &   {\bf 0.3982}&  {\bf 0.2135} & - 	       &   -\\
    5  &  {\bf 0.8944} &  0.5961 	&   0.4705 	&   {\bf 0.3296} & -	       &   -\\
    6  &  {\bf 0.9129} &  0.6279 	&   0.5023 	&   {\bf 0.3849} & 0.0993 &   -\\
    7  &  {\bf 0.9258} &  0.6362 	&   0.5250 	&   0.4168 	& 0.1810 &   -\\
    8  &  {\bf 0.9354} &  0.6517 	&   0.5522 	&   0.4435 	& 0.2341 &  {\bf 0.0708}\\
    9  &  {\bf 0.9428}  &  0.6668 	&   0.5791 	&   {\bf 0.4381} & 0.2754 &  0.1191\\
   10 &  {\bf 0.9487}  &  0.6792 	&   0.6063 	&   0.4236 	& 0.3165 &  0.1689 \\
\hline
\end{tabular}
\caption{Effective SSP coefficients of the optimized eSSP-TSRK$^+$(s,p) methods.}
 \label{tab:effSSPcoefIF} 
\end{table}

\section{Numerical Results\label{sec:test}}

In this section we demonstrate the performance of 
 the SSP integrating factor  two-step Runge--Kutta (SSPIF-TSRK) 
methods based on the  eSSP-TSRK$^+$ methods  reported in Section \ref{sec:optimal}. 
The SSP coefficients of these methods are presented in Table  \ref{tab:SSPcoefIF};
 the coefficients of these methods can be downloaded from \cite{SSPIF-TSRKgithub}.
 
Our tests focus on convergence and  the SSP properties of the methods.
In Subsection  \ref{sec:Num_vdp}  we  verify convergence  at the design order
for a selection of these methods on the van der Pol problem.
In Subsections \ref{sec:Num_linear} and \ref{sec:Num_nonlinear} we study the behavior of these methods in terms of their allowable 
TVD time-step on  linear and nonlinear problems with spatial discretizations that are TVD. 
The simple TVD test in these  sections  have been used extensively because they  demonstrate the behavior of the
SSP time-step, and provide evidence that there are some cases in which the SSP property is necessary to preserve the TVD 
property.

\subsection{Example 1: Convergence study}\label{sec:Num_vdp}  
Consider the van der Pol problem, a nonlinear system of ODEs of the form
\begin{eqnarray*}
u_1' &=& u_2 \\ 
u_2' &=&  (-u_1 + (1-u_1^2) u_2).
\end{eqnarray*}
The problem can be written as
$\vu_t =L \vu + N(\vu)$ where  $\vu=(u_1;u_2)$ and 
 \[L =  \left( \begin{array}{rr}
0 & 1 \\
-1 & 0\\
\end{array} \right), \; \; \; 
N(\vu) = \left(\begin{array}{c}
0 \\ (1-u_1^2) u_2
\end{array} \right).
 \]
We initialize the problem with $\vu_0 = (2;  0)$. Using a variety of SSPIF-TSRK methods, we run  the problem to 
final time $T_{final} =  2$, with $\Delta t = 0.01, 0.02, 0.04, 0.05, 0.08, 0.10$. The initial values and the exact solution 
(for error calculation)  was calculated by {\sc Matlab}'s ODE45 routine with tolerances set to
 {\tt AbsTol=}$2\times10^{-14}$ and {\tt  RelTol=}$2\times10^{-14}$. 
We tested  the SSPIF-TSRK methods based on the eSSP-TSRK$^+$ methods whose SSP coefficients are
presented in Table \ref{tab:SSPcoefIF} above. We  calculated  the orders by finding the slopes of the $\log_{10}(errors)$ using
 {\sc Matlab}'s {\tt polyfit} function.
The results, shown in Figure \ref{fig:VDPconv} for several selected methods, 
validate that the methods exhibit the expected order of convergence.
\begin{figure}[t]
\begin{center}
\includegraphics[scale=.375]{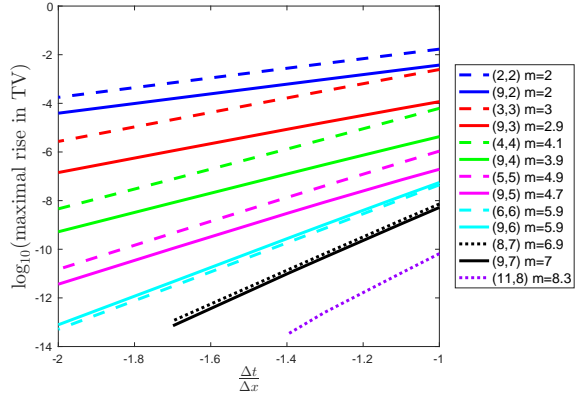}
\end{center} 
\caption{Convergence plot for Example 1. 
The slope of the line is given by $m$ in the legend.
} \label{fig:VDPconv}
\end{figure}  


\subsection{Example 2:  Sharpness of SSP  time-step for a linear problem}\label{sec:Num_linear}
While the importance of SSP methods is not limited to  cases where we need to preserve  
the TVD property,  TVD studies tend to exhibit the sharpness of the SSP time-step
and so are traditionally used to test SSP time-stepping methods. 
We begin with a case where the TVD property of the spatial discretization can be proven, and 
investigate the TVD behavior of the numerical solutions resulting from 
evolving the semi-discretized problem using our SSP integrating factor two-step Runge--Kutta methods.

We use our SSPIF-TSRK methods to evolve in time the linear advection equation 
on the domain $x\in [0,1]$ with periodic boundary conditions and a step function initial condition
\begin{align}\label{lineartestproblem}
u_t + a u_x + u_x & = 0 \hspace{.75in}
    u(0,x)  =
\begin{cases}
1, & \text{if } \frac{1}{4} \leq x \leq \frac{3}{4} \\
0, & \text{else }.
\end{cases}
\end{align}
Using  a simple  first-order forward difference  to discretize  the spatial derivatives 
with a spatial grid of $1000$ points. 
We split the problem as with $L u \approx - a u_x$ and $ N(u) \approx - u_x$.
The spatial discretization is TVD when coupled with forward Euler in the sense that
\[ \|u^n + \Delta t L u^n \|_{TV} \leq \| u^n \|_{TV}  \; \; \; \mbox{for}. \; \; \;  \tDtFE  = \frac{1}{a} \Dx \]
and 
\[ \|u^n + \Delta t N( u^n)  \|_{TV} \leq \| u^n \|_{TV}  \; \; \; \mbox{for} \; \; \;  \DtFE  =  \Dx. \]

We evolve the numerical solution forward ten time-steps for different values of $\lambda = \frac{\dt}{\Dx}$,
and measure the total variation at each stage. To ensure internal stage monotonicity we 
 compare the total variation at any stage to the  total variation at the previous stage.
We are interested in the  {\em observed TVD time-step} $\dt^{TVD}_{obs}$, which is
the size of  time-step $\dt$ at which the maximal rise  in total variation between any two stages is greater than $10^{-12}$. We compare the observed TVD time-step with the theoretical TVD time-step condition. 
The SSP coefficient that corresponds to   the value of the observed TVD time-step is called the 
 {\em observed SSP coefficient} $\sspcoef_{obs}$, defined by
  \[ \sspcoef_{obs} =  \frac{\dt^{TVD}_{obs}}{\DtFE}  .\]
  Since the forward Euler TVD time-step for this problem is $ \DtFE  =  \Dx$, we have
 \[ \sspcoef_{obs} = \frac{\dt^{TVD}_{obs}}{\Dx} =  \lambda^{TVD}_{obs}.\]

\subsubsection{Example 2a: Comparison of integrating methods for wavespeeds $\boldsymbol{a=5}$ and $\boldsymbol{a=2}$}
In this example we consider a variety of SSPIF-TSRK methods for the time-evolution 
of \eqref{lineartestproblem} with wavespeeds $a=5$ and $a=2$.
 
\begin{figure}[h]
\begin{center}
\includegraphics[scale=.3]{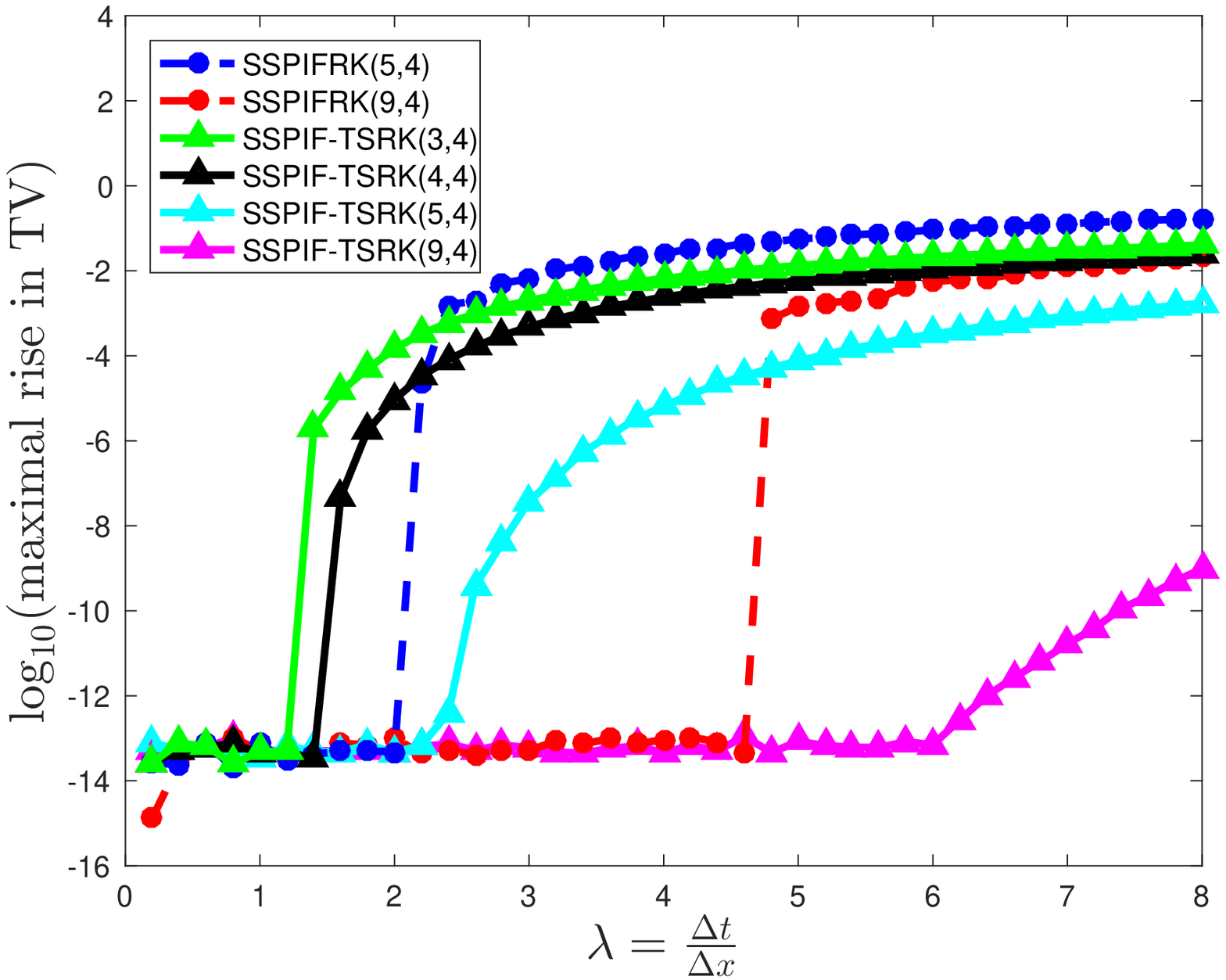}
\includegraphics[scale=.3]{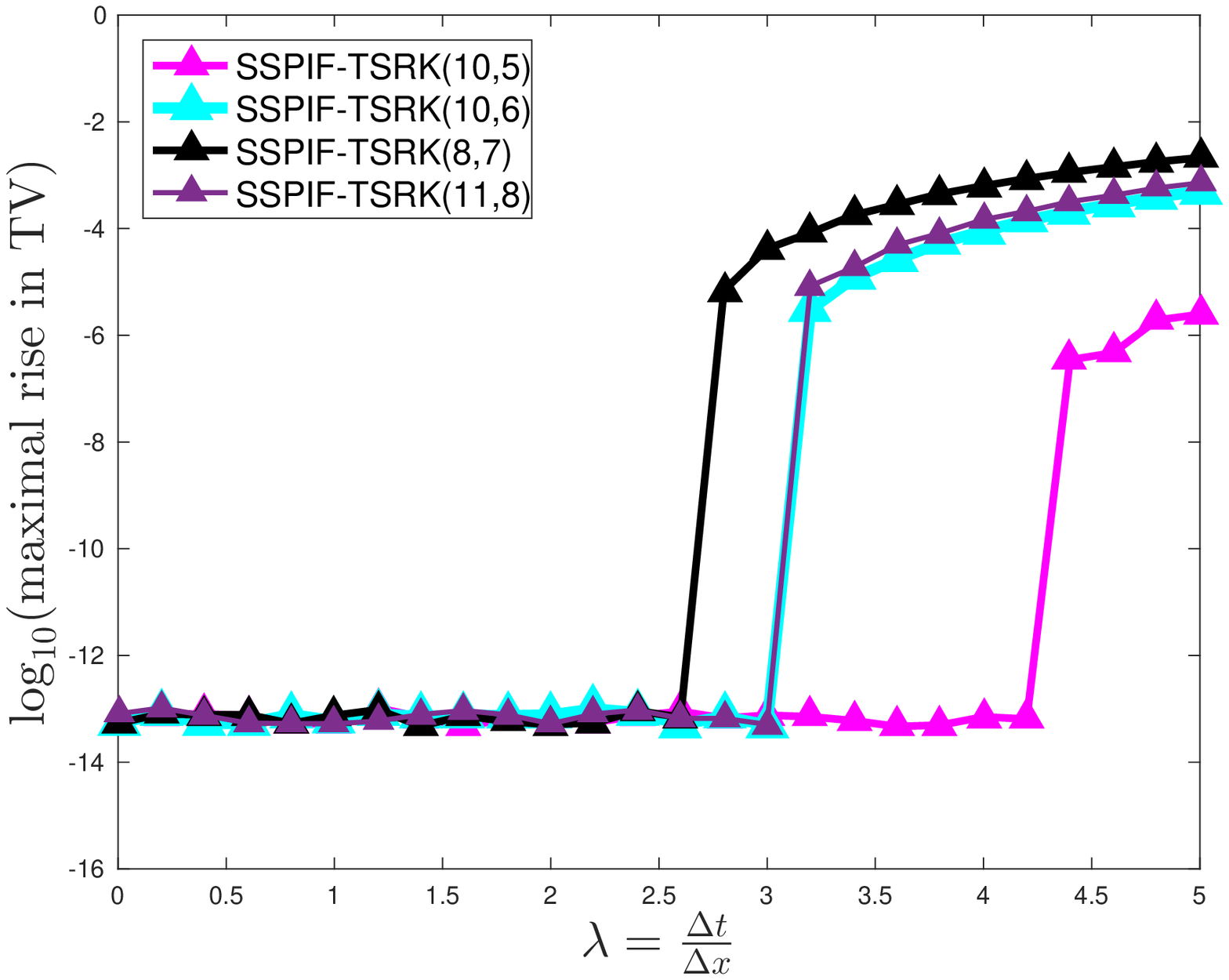} 
\end{center} 
 \caption{Example 2a: Linear advection with a step function initial condition and wavespeeds $a=5$ (left) and $a=2$ (right).
 \label{fig:ex2aTVD} }
\end{figure}
 
In Figure \ref{fig:ex2aTVD} (left) we show the observed maximal rise in total variation when 
this equation is evolved forward by several SSPIF-TSRK methods using different values of the 
CFL number $\lambda = \frac{\dt}{\dx}$. We consider a selection of fourth order methods.
 First, we explore the performance  of the  SSPIF-TSRK$(s,p)$  methods  with $(s,p)=(3,4), (4,4)$
(the corresponding SSPIFRK methods do not exist).
 If we consider the effective observed time-step \[ \sspcoef_{obs}^{eff} = \frac{1}{s} \sspcoef_{obs}\]
(i.e. the observed time-step normalized by the number of stages $s$) we see that the SSPIFRK$(9,4)$ has 
$ \sspcoef_{obs}^{eff} = 0.47$ while the SSPIF-TSRK$(3,4)$ has  a smaller $ \sspcoef_{obs}^{eff} = 0.42$
and SSPIF-TSRK$(4,4)$ has an even smaller $ \sspcoef_{obs}^{eff} = 0.4$. 
However, we observe that the allowable TVD time-step of the SSPIFRK methods with 
$(s,p)=(5,4), (9,4)$ is smaller than that of the  corresponding SSPIF-TSRK methods.
The SSPIF-TSRK methods allow us to obtain higher order for fewer stages than the  SSPIFRK methods, 
and larger observed time-steps for the same $(s,p)$.

Figure \ref{fig:ex2aTVD} (right) shows us the performance of the higher order SSPIF-TSRK methods, 
with orders $p=5,6,7,8$.  In this case we use $a=2$ in \eqref{lineartestproblem}. We selected the best performing methods
in terms of observed SSP time-step, the SSPIF-TSRK$(s,p)$ methods for $(s,p)=(10,5), (10,6), (8,7), (11,8)$.
We note that the $(s,p)=(11,8)$ method has the same SSP time-step performance as the $(s,p)=(10,6)$, showing the
eighth order method to be a viable and efficient method.


\subsubsection{Example 2b: Considering different wavespeeds}
In \cite{SSPIFRK-SINUM} we showed that unlike fully explicit SSP Runge--Kutta methods or implicit-explicit SSP Runge--Kutta (SSP-IMEX) methods, the SSP integrating factor Runge--Kutta 
schemes have an allowable SSP time-step that is not impacted by the restriction on the time-step resulting from the linear component $L$. This is also true for the SSP integrating factor two-step Runge--Kutta methods. 
In this section, we investigate how different wavespeeds in $L$ impact the allowable SSP time-step of different methods.

 \begin{table}[h] 
 \begin{center}
 \begin{tabular}{|l|c|ccc|} \hline
 Method  &   $\sspcoef $  &    \multicolumn{3}{|c|}{$\lambda^{TVD}_{obs}$} \\ 
 & 			& for $a=0$ &  $a=1.0$ &  $a=5$ \\ \hline
SSPIF-TSRK(3,4) & 0.8588 & 1.0454 & 1.2550 & 1.2621  \\
SSPIF-TSRK(5,4) & {\bf  2.3523} & {\bf 2.3523} & {\bf 2.3523} & 2.4123  \\
SSPIF-TSRK(9,4) & {\bf 5.2120} & {\bf 5.2120} & {\bf 5.2120} & 6.4010 \\ 
SSPIF-TSRK(4,5) & 0.8542 & 1.1852 & 1.3388 & 1.3389 \\
SSPIF-TSRK(6,5) & 2.3093 & 2.3093 & 2.3093 & 2.3094 \\
SSPIF-TSRK(9,5) & {\bf 3.9426} & {\bf 3.9426 } & {\bf 3.9426} & 4.1173  \\
SSPIF-TSRK(6,6) & 0.5958 & 1.7771 & 1.7891 & 1.7893 \\
SSPIF-TSRK(7,6) & 1.2671 & { 2.0239} & { 2.0239} & 2.0261 \\
SSPIF-TSRK(9,6) & 2.4784 & { 2.8038} & { 2.8038}  & 2.8204\\
SSPIF-TSRK(8,7) & 0.5666 & 1.6624 & 2.6737 & 2.7788 \\
SSPIF-TSRK(9,7) & 1.0715 & 2.1626 & 2.4053 & 2.4053 \\ 
SSPIF-TSRK(11,8)& 0.2743 & 2.3871 & 3.1137 & 3.1271 \\ \hline
 \end{tabular}
 \end{center}
 \caption{ \label{tab:IFTSex} The observed SSP coefficient of a variety of SSPIF-TSRK$(s,p)$
 methods
 compared to theirpredicted SSP coefficient, shown for  various wavespeeds $a$.
  The value of $a$ does not negatively impact the observed SSP coefficient. }
 \end{table}

In Table \ref{tab:IFTSex} we show the observed SSP coefficient $\lambda^{TVD}_{obs}$
for values of $a=0,1,5$ in \eqref{lineartestproblem} for a variety of SSPIF-TSRK methods. We notice that
for the methods with $(s,p)=(5,4), (9,4), (6,5), (9,5)$ the observed SSP coefficient for $a=0,1$ was 
sharp, i.e. exactly the SSP coefficient $\sspcoef$ predicted by the theory. As $a$ gets larger ($a=5$) 
the  observed SSP coefficient becomes larger than predicted by the theory, which only provides a
guaranteed lower bound. This occurs because the
damping effect of the exponential increases as $a$ gets larger and so rises in total variation are 
damped out and fall below the threshold. For the other methods, the  observed SSP coefficient for $a=0$ 
is larger than predicted by the theory. For $(s,p)= (7.6), (9,6)$ the observed SSP coefficient  for 
 $a=0,1$ was the same, while for $a=5$ the observed SSP coefficient  is larger.
 For the other methods, we see an increase in the observed SSP coefficient as $a$ increases.
 
 These results are in sharp distinction to explicit SSP Runge--Kutta methods and SSP-IMEX methods.
In Table \ref{tab:ex2b} we compare the observed SSP coefficient of some
 four stage explicit SSP integrating factor methods
 to those of the explicit SSP Runge--Kutta  method eSSPRK(4,3) and the 
 SSP-IMEX(4,3,K) methods. 
 
When the eSSPRK(4,3) method is applied to \eqref{lineartestproblem} with 
 wavespeed $a+1$, where $a=0,1,2,10$,  Table \ref{tab:ex2b} shows that the observed  
 SSP coefficient exactly matches the predicted
 \[ \lambda^{TVD}_{obs} =   \frac{\sspcoef }{a+1}   = \frac{2}{a+1}.\]
 This means that as the wavespeed $a$ increases, the observed SSP coefficient goes down as
 expected.
 
 The SSP-IMEX(4,3,K) we use are from \cite{SSPIMEX}, and 
have SSP explicit and implicit parts that  were optimized for the SSP step size 
for each value of $K=\frac{1}{a}$.  The results presented in Table \ref{tab:ex2b}
show that, as expected from SSP theory, the observed SSP coefficient
decays linearly as the wavespeed $a$ rises. This matches with the 
results in \cite{SSPIMEX} that, in contrast to the fact that IMEX methods may be
A-stable, they cannot be unconditionally SSP past first order. This limitation
highlights the need for SSP integrating factor methods.

Finally, we see that the observed SSP coefficients for the 
explicit SSP integrating factor methods
do not decay as the wavespeed $a$ goes up. For the 
 SSPIFRK(4,3) method the observed SSP coefficient is exactly the same for 
$a=0,1,2,10$, while the SSPIF-TSRK(4,3) and SSPIF-TSRK(4,4) methods
have the same observed SSP coefficient for $a=0,1,2$ and a {\em larger} 
observed SSP coefficient for $a=10$.

\begin{table}[h] 
 \begin{center}
 \begin{tabular}{|l|cccc|} \hline
 Method    &  \multicolumn{4}{|c|}{$\lambda^{TVD}_{obs}$} \\ 
			&  $a=0$ &   $a=1.0$ & $a=2$  &$a=10$ \\ \hline
SSPIF-TSRK(4,3) & 2.303 & 2.303 & 2.303 & 2.775  \\
SSPIF-TSRK(4,4) & 1.593 & 1.593 & 1.593 & 1.639    \\ 
SSPIFRK(4,3)       & 1.818 & 1.818 & 1.818 & 1.818  \\ 
SSP-IMEX(4,3,K)     & 2.000 & 1.476 & 1.192 & 0.310\\ 
eSSPRK(4,3) 	       & 2.000 & 1.000 & 0.666 & 0.181 \\ \hline 
 \end{tabular}
 \end{center}
 \caption{ \label{tab:ex2b}The observed SSP coefficients for integrating factor methods, an SSP-IMEX method, 
 and an explicit Runge--Kutta method for Example 2b with various wavespeeds $a$. 
 The value of $a$ does not negatively impact the observed SSP coefficient
 for the integrating factor methods, but it does for the IMEX and the explicit Runge--Kutta methods.  }
 \end{table}

\subsection{Example 3: Sharpness of SSP time-step for a nonlinear problem}\label{sec:Num_nonlinear}
Once again we consider the motivating problem \eqref{BurgersAdvection}
in Subsection \ref{motivating}:
  \begin{align}
u_t + a u_x +  \left( \frac{1}{2} u^2 \right)_x & = 0 \hspace{.75in}
    u(0,x)  =
\begin{cases}
1, & \text{if } 0 \leq x \leq 1/2 \\
0, & \text{if } x>1/2 \nonumber
\end{cases}
\end{align}
on the domain $[0,1]$ with periodic boundary conditions.

We used a first-order upwind difference to semi-discretize, with $400$ spatial points, the linear term $Lu \approx - a u_x$, 
and a fifth order WENO finite difference for the  nonlinear terms $N(u) \approx -  \left( \frac{1}{2} u^2 \right)_x$. 
We note that the WENO scheme is not TVD. However, its total variation is generally well-controlled in 
typical simulations.

We evolve the semi-discrerized problem forward 25 time steps
using $\dt = \lambda \dx$, and measure the total variation at each stage. We then  calculate the 
maximal rise in total variation over each stage.  Once again, the quantity 
of interest is the observed TVD time-step,  but in the absence of a forward Euler condition
(as WENO is not TVD) we cannot determine the observed SSP coefficient, only the observed TVD
time-step.

We first show the results for a selection of third order methods on problem \eqref{BurgersAdvection} 
with the value  $a=5$. In Figure \ref{fig:WENOBurgers_adv} (left) we 
plot  $\log_{10}$ of the maximal rise in total variation
versus the ratio $\lambda = \frac{\dt}{\Delta x}$. Our
SSPIF-TSRK(3,3) method out-performs the 
SSPIFRK(3,3) method in \cite{SSPIFRK-SINUM} by more than 10\%. 
In contrast, the fully explicit three stage third order Shu-Osher method applied 
begins to feature a large rise in total variation
for a much smaller value of $\lambda=.15 \approx \frac{1}{1+a}$, as expected.
Evolving the transformed problem with the Shu-Osher method \eqref{SOIF} (denoted 
IF with eSSPRK(3,3)) and with the  three-stage third order Cox and Matthews exponential time differencing
 Runge--Kutta method   (ETDRK3)  \cite{CoxMatthews2002} 
results in a maximal rise in total variation that increases rapidly with $\lambda$.

\begin{figure}[h]
\begin{center}
  \includegraphics[scale=.3]{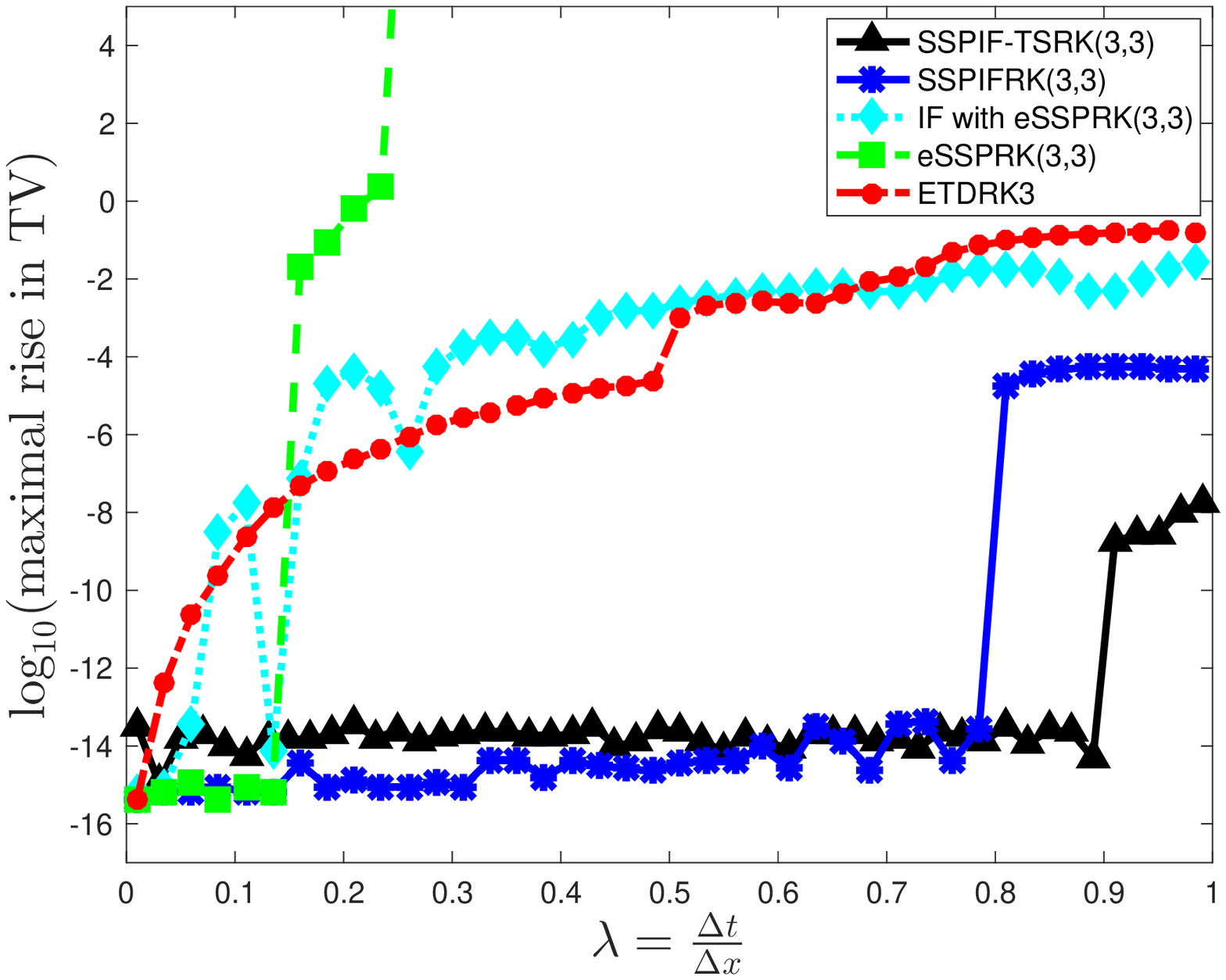} 
  \includegraphics[scale=.3]{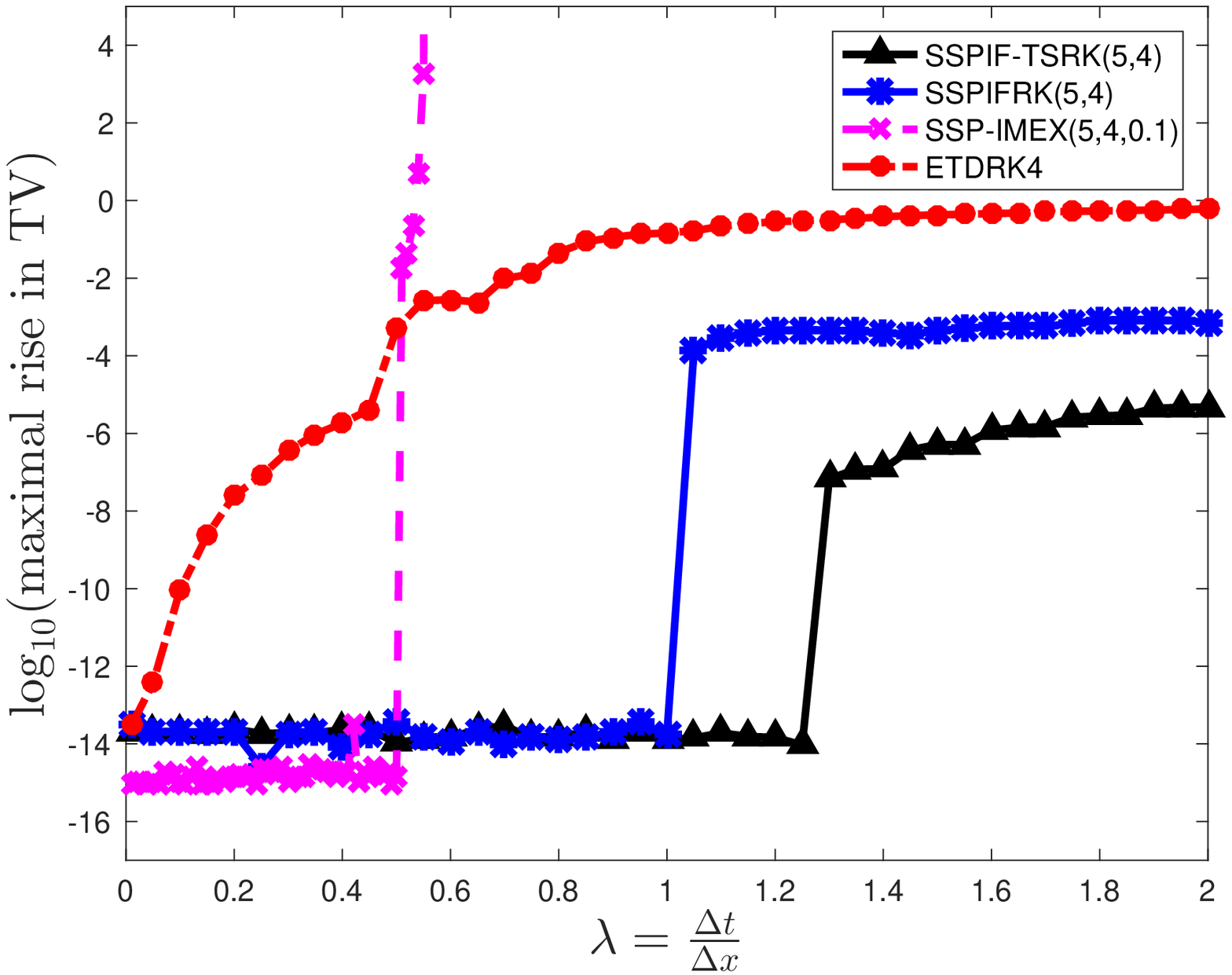} 
    \caption{ 
A comparison of the TVD time-step for a variety of integrating factor, exponential time differencing,
explicit and IMEX  SSP Runge--Kutta methods applied to Example 3.
Left:    Third order methods. Right: Fourth order methods.
 \label{fig:WENOBurgers_adv} }
 \end{center}
\end{figure}

In Figure \ref{fig:WENOBurgers_adv}  (right) we show a similar study using 
fourth order methods. 
Our SSPIF-TSRK(5,4) method allows a  TVD time-step of $\lambda_{obs} \approx 1.3$, 
while  the  corresponding integrating factor Runge--Kutta method
SSPIFRK(5,4) method maintains a  small maximal rise in total variation 
until   $\lambda_{obs} \approx 1$. The fourth order ETDRK4 method  of  \cite{CoxMatthews2002}  
does not have good TVD performance:
the maximal rise in total variation when using the 
ETDRK4  rises rapidly from the smallest value of $\lambda$.
The  SSP-IMEX(5,4,0.1) method features an observed  value of $\lambda_{obs} \approx 0.5$, 
pointing once again to the fact that IMEX methods are not as  suitable when the SSP property 
is desired.

Finally, we compare TVD time-step of the SSPIF-TSRK$(s,p)$ methods with $(s,p)=(4,3),(5,4), (6,5), (7,6), (8,7)$.
In Figure \ref{fig:WENOBurgers_advOrders} we show the  $\log_{10}$ of the maximal rise in total variation 
at each stage compared to the  previous stage, plotted against  the ratio $\lambda = \frac{\dt}{\Delta x}$. 
These integrating factor methods all perform well for this problem.
Notably, the seventh order method has the largest observed TVD time-step and the sixth order method 
has the smallest observed TVD time-step for both $a=1$ and $a=5$. The fourth order method
behaves consistently for both $a=1$ and $a=5$. These results are all consistent with those of the 
linear test case.
Once again, we observe that as the exponent increases we generally have  a larger allowable TVD time-step, 
as we noted above.

\begin{figure}[h]
\begin{center}
  \includegraphics[scale=.3]{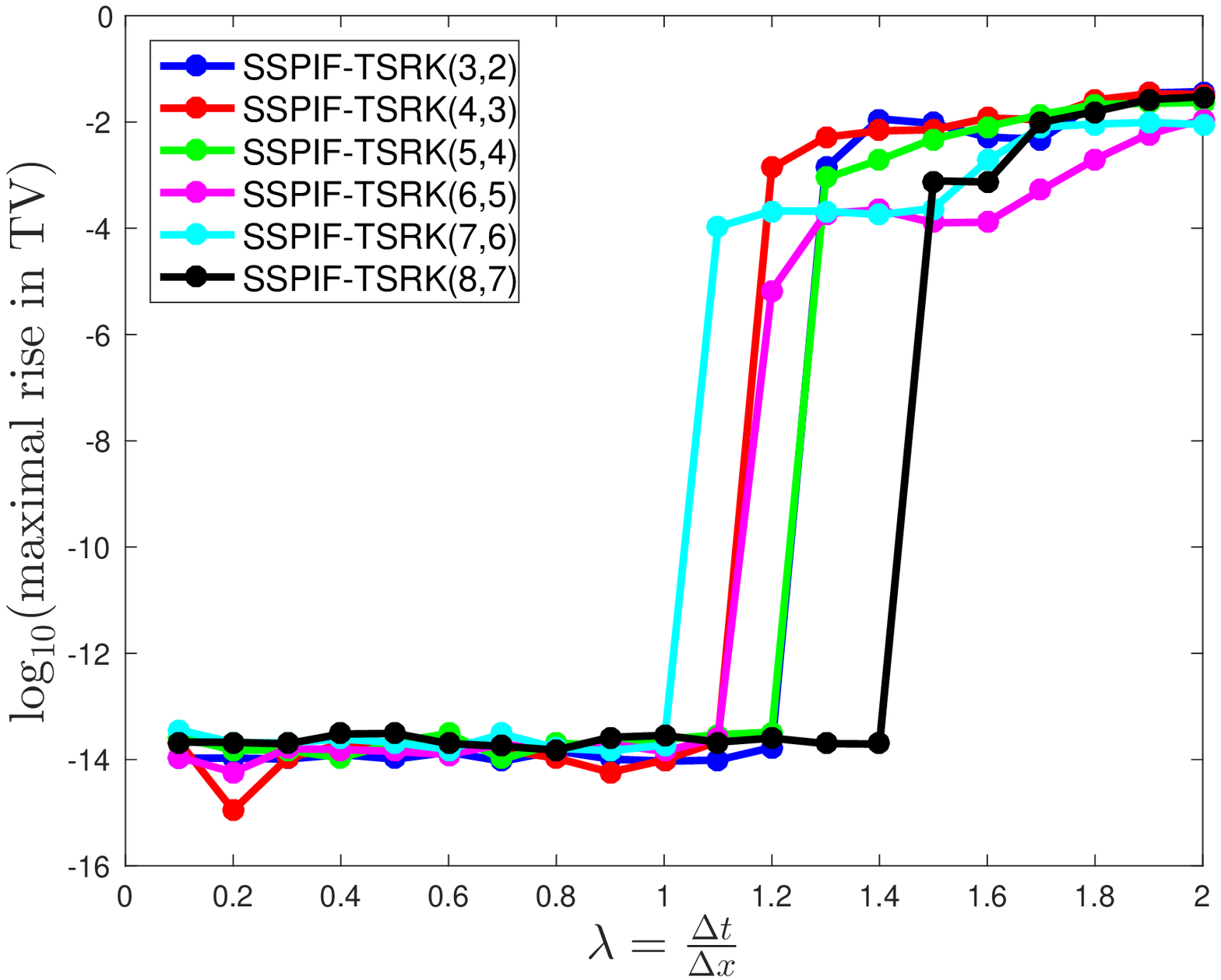} 
  \includegraphics[scale=.3]{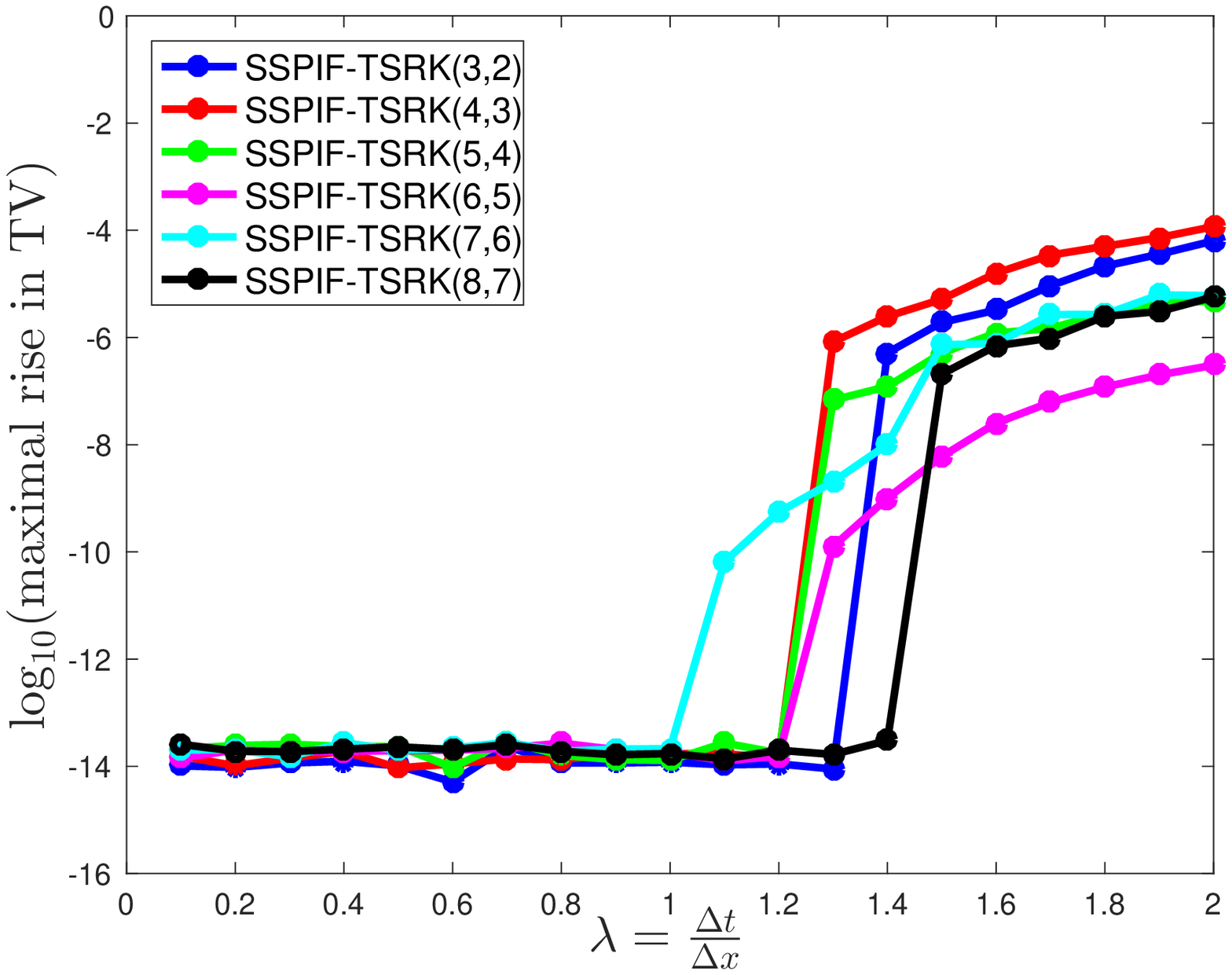} 
    \caption{Performance of integrating factor methods for Example 3 for $a=1$ (left) and $a=5$ (right).
 \label{fig:WENOBurgers_advOrders} }
 \end{center}
\end{figure}

\section{Conclusions}  \label{sec:conclusions}
In this work we extended the SSP theory for integrating factor Runge--Kutta methods  established in \cite{SSPIFRK-SINUM}
to integrating factor two step Runge--Kutta methods. SSP integrating factor methods are needed for problems that 
require a nonlinear non-inner product stability property, and where the linear component is severely restricting the time-step. Unlike the case where linear $L_2$ stability is of interest, implicit or IMEX methods do not alleviate 
this restriction. However,  integrating factor Runge--Kutta methods based on applying a SSP 
Runge--Kutta method with nondecreasing abscissas to the transformed equation completely  alleviate the time-step restriction. SSP Runge--Kutta methods only exist up to fourth order. To extend these results to higher order methods, we considered herein  SSP two-step Runge--Kutta methods applied in conjunction with an integrating factor approach. We showed that, just as for the Runge--Kutta case, if the two-step Runge--Kutta methods are SSP and have non-decreasing abscissas then they can be used to time-step the transformed problem. We enhanced the optimization problem in \cite{tsrk} with the condition \eqref{nnabscissa} and found SSP two-step Runge--Kutta methods with non-decreasing abscissas that have optimized SSP coefficients (presented in Table  \ref{tab:SSPcoefIF}). We tested the resulting SSPIF-TSRK methods on a variety of problems 
and showed that the new high order methods perform as expected in terms of convergence and the strong stability property.

\bigskip

\appendix

\section{Order Conditions}  \label{orderconditions}
 The order conditions, \eqref{eq:constraints3}  for a $k$-step SSP Runge-Kutta were derived by Ketcheson
and used in \cite{msrk}.
We can write the generalized form for a multi-step Runge--Kutta method as 
\begin{align*} 
y_1^n & =  u^n \\
y_i^n & =   \sum_{l=1}^{k} d_{il} u^{n-k+l} + \Dt \sum_{l=1}^{k-1}\hat{a}_{il} F(u^{n-k+1}) + \Dt\sum_{j=1}^{i-1} a_{ij} F(y_j^n) \; \; \; \;  2 \leq i \leq s \\
u^{n+1} & = \sum_{l=1}^{k} \theta_l u^{n-k+l}  + \Dt\sum_{l=1}^{k-1} \hat{b}_{l} F(u^{n-k+l}) + \Dt\sum_{j=1}^s b_j F(y_j^n).
\end{align*} 
and rewrite it into a matrix form by defining
\begin{align*} 
\tilde{\mD} & = \begin{pmatrix}\mI_{(k-1) \times (k-1)} \ \mzero_{1 \times (k-1)} \\ \mD \\   \end{pmatrix} \ \ \ \
\tilde{\mA}   = \begin{pmatrix}\mzero & \mzero  \\ \mAh & \mA  \\  \end{pmatrix} \ \ \ \
\text{and} & \tilde{\bb} =  \begin{pmatrix} \bbh & \bb  \end{pmatrix}. 
\end{align*}
The method then becomes
\begin{align*} 
\by^n & = \tilde{\mD} \bu^n + \Delta t  \tilde{\mA} \bff^n \\
u^{n+1} & = \btheta\transpose \bu^n + \Delta t \tilde{\bb}\transpose \bff^n
\end{align*}
and by leting $\vl$ be the vector $\vl=\left(k-1, k-2, ..., 1,0 \right)^T$ to compute the abscissas 
$\vc = \tilde{\mA} \ve -\tilde{\mD} \vl$
to get the the following expressions 
\begin{align*} 
\ste_\rho& = \frac{1}{\rho!} \left(\vc^{\rho}-\tilde{\mD}(-\vl)^\rho\right)-\frac{1}{(\rho-1)!}\tilde{\mA}\vc^{\rho-1}\ \ \\ 
\tau_\rho &= \frac{1}{\rho!} \left(1-\btheta(-\vl)^\rho\right)-\frac{1}{(\rho-1)!}\tilde{\bb}^T\vc^{\rho-1} \ \
\end{align*}

Below are the order conditions for methods from first to eighth order. As the order increases the methods must satisfy all previous order conditions as well as the additional ones for each order.

\noindent First:
\begin{align*}
\tilde{\vb^T} \ve & = 1+\btheta^T\vl
\end{align*}
Second:
\begin{align*} 
\tilde{\vb^T} \vc & =  \frac{1-\btheta^T\vl^2}{2}.
\end{align*}
Third:
\begin{align*} 
\tilde{\vb}^T \left(\vc^2 \right) & = \frac{1+\btheta^T\vl^3}{3}, &
\tilde{\vb}^T \ste_2 & = 0.
\end{align*}
Fourth:
\begin{align*} 
\tilde{\vb}^T \left(\vc^3 \right)  = \frac{1-\btheta^T\vl^4}{4}, \quad \quad  &
\tilde{\vb}^T \tilde{\mA}\ste_2 = 0, &
\tilde{\vb}^T \mC\ste_2 = 0, \quad \quad &  
\tilde{\vb}^T \ste_3 = 0  
\end{align*}
Fifth:
\begin{align*} 
\tilde{\vb}^T \left(\vc^4\right)  = \frac{1+\btheta^T\vl^5}{5},\quad \quad&
\tilde{\vb}^T \tilde{\mA}\ste_3 = 0, & 
\tilde{\vb}^T \mC\ste_3 = 0, \quad \quad &  
\tilde{\vb}^T \ste_4 = 0, & 
\ste_2 = 0 
\end{align*}
Sixth:
\begin{align*} 
\tilde{\vb}^T \left(\vc^5 \right)  = \frac{1-\btheta^T\vl^6}{6},\quad \quad&
\tilde{\vb}^T \tilde{\mA}\ste_4 = 0, &
\tilde{\vb}^T \mC\ste_4 = 0,\quad \quad &
\tilde{\vb}^T \ste_5 = 0, \\
\tilde{\vb}^T \tilde{\mA^2}\ste_3 = 0,\quad \quad & 
\tilde{\vb}^T \tilde{\mA}\mC\ste_3 = 0, &
\tilde{\vb}^T \mC\tilde{\mA}\ste_3 = 0,\quad \quad &
\tilde{\vb}^T \mC^2\ste_3 = 0
\end{align*}
Seventh:
\begin{align*} 
\tilde{\vb}^T \left(\vc^6 \right)  = \frac{1+\btheta^T\vl^7}{7}, \quad \quad \quad \quad & 
\tilde{\vb}^T \tilde{\mA}\ste_5 = 0, & 
\tilde{\vb}^T \mC\ste_5 = 0,  \\
\tilde{\vb}^T \ste_6 = 0, \quad \quad \quad \quad& 
\tilde{\vb}^T \tilde{\mA^2}\ste_4 = 0, & 
\tilde{\vb}^T \tilde{\mA}\mC\ste_4 = 0,  \\
\tilde{\vb}^T \mC\tilde{\mA}\ste_4 = 0, \quad \quad \quad \quad &  
\tilde{\vb}^T \mC^2\ste_4 = 0, &
\ste_3 = 0
\end{align*}
Eighth:
\begin{align*} 
\tilde{\vb}^T \left(\vc^7 \right)  = \frac{1-\btheta^T\vl^8}{8}, \quad \quad& 
\tilde{\vb}^T \tilde{\mA}\ste_6 = 0, & 
\tilde{\vb}^T \mC\ste_6 = 0, \quad \quad&
\tilde{\vb}^T \ste_7 = 0, \\
\tilde{\vb}^T \tilde{\mA^3}\ste_4 = 0, \quad \quad & 
\tilde{\vb}^T \tilde{\mA^2}\ste_5 = 0, & 
\tilde{\vb}^T \tilde{\mA^2}\mC\ste_4 = 0,\quad \quad & 
\tilde{\vb}^T \tilde{\mA}\mC\tilde{\mA}\ste_4 = 0, \\
\tilde{\vb}^T \tilde{\mA}\mC\ste_5 = 0, \quad \quad&  
\tilde{\vb}^T \tilde{\mA}\mC^2\ste_4 = 0, &
\tilde{\vb}^T \mC\tilde{\mA^2}\ste_4 = 0, \quad \quad &
\tilde{\vb}^T \mC\tilde{\mA}\ste_5 = 0, \\
\tilde{\vb}^T \mC\tilde{\mA}\mC\ste_4 = 0, \quad \quad &  
\tilde{\vb}^T \mC^2\tilde{\mA}\ste_4 = 0, &
\tilde{\vb}^T \mC^2\ste_5 = 0, \quad \quad&
\tilde{\vb}^T \mC^3\ste_4 = 0\\
\end{align*}
Where $\mC$ is a diagonal matrix of the abscissas $\mC=diag(\vc)$ and the exponentiation is considered to be element wise i.e. $\vc^3=\left(\vc \cdot \vc \cdot \vc \right)$.

\bigskip

{\bf Acknowledgment.} 
This publication is based on work supported by  AFOSR grant FA9550-18-1-0383. 
A part of this research is sponsored by the Office of Advanced Scientific Computing Research; US Department of Energy, and was performed at the Oak Ridge National Laboratory, which is managed by UT-Battelle, LLC under Contract no. De-AC05-00OR22725. This manuscript has been authored by UT-Battelle, LLC, under contract DE-AC05-00OR22725 with the US Department of Energy. The United States Government retains and the publisher, by accepting the article for publication, acknowledges that the United States Government retains a non-exclusive, paid-up, irrevocable, world-wide license to publish or reproduce the published form of this manuscript, or allow others to do so, for United States Government purposes.

\newpage


\begin{thebibliography}{10}

\bibitem{AlMohyHigham}
{\sc A.H. Al Mohy and  N.J. Higham},
{\em Computing the action of the matrix exponential of a matrix,
with an application to exponential integrators},
SIAM Journal on Scientific Computing 33(2) (2011), pp.~488--511.

\bibitem{Sandu} 
{\sc E. M. Constantinescu and A. Sandu}, 
{\em Optimal strong-stability-preserving general linear methods},
 SIAM Journal of Scientific Computing 32(5) (2010), pp.~3130-3150.



\bibitem{Grant1}
{\sc A.J. Christlieb, S. Gottlieb, Z. Grant, and D. C. Seal},
{\em Explicit Strong Stability Preserving Multistage Two-Derivative Time-Stepping Schemes},
Journal of Scientific Computing 68(3) (2016),  pp.~914-942.



\bibitem{SSPIMEX} 
{\sc S. Conde, S. Gottlieb, Z. Grant, J.N. Shadid},
{\em Implicit and Implicit-Explicit Strong Stability Preserving RungeÐKutta Methods with High Linear Order},
 Journal of Scientific Computing 73(2)  (2017), pp.~667--690.


\bibitem{CoxMatthews2002}
{\sc S.~Cox and P.~Matthews}, {\em Exponential time differencing for stiff
  systems}, Journal of Computational Physics, 176 (2002), pp.~430--455.

\bibitem{ferracina2008}
{\sc  L. Ferracina and M.N. Spijker},
{\em  Strong stability of singly-diagonally-implicit {R}unge-{K}utta methods},
Applied Numerical Mathematics 58 (2008), pp.~1675-1686.


\bibitem{GaudreaultRainwaterTokman}
{\sc   S. Gaudreault, G. Rainwater, M. Tokman},
{\em KIOPS: A fast adaptive Krylov subspace solver for exponential integrators},
Journal of Computational Physics 372  (2018), pp.~236-255. 


\bibitem{SSPIF-TSRKgithub}
{\sc S.~Gottlieb, Z.~Grant, and L.~Isherwood}, {\em Optimized strong stability
  preserving integrating factor two-step {R}unge--{K}utta methods}.
\newblock \url{https://github.com/SSPmethods/SSPIF-TSRK-methods}.

\bibitem{SSPbook2011}
{\sc S.~Gottlieb, D.~I. Ketcheson, and C.-W. Shu}, {\em Strong Stability
  Preserving Runge--Kutta and Multistep Time Discretizations}, World Scientific
  Press, 2011.

\bibitem{gottliebshu1998}
{\sc S.~Gottlieb and C.-W. Shu}, {\em Total variation diminishing Runge--Kutta
  methods}, Mathematics of Computation, 67 (1998), pp.~73--85.

\bibitem{gottlieb2001}
{\sc S.~Gottlieb, C.-W. Shu, and E.~Tadmor}, 
{\em Strong Stability Preserving  High-Order Time Discretization Methods}, 
  SIAM Review, 43 (2001), pp.~89--112.

\bibitem{Grant2}
{\sc Z. Grant, S. Gottlieb, D.C. Seal},
{\em A Strong Stability Preserving Analysis for Explicit Multistage Two-Derivative 
Time-Stepping Schemes Based on Taylor Series Conditions},  
To appear in Communication on Applied Mathematics and Computation 1(1) (2019), pp.~21-59. 


\bibitem{HesthavenCLbook}
{\sc J.S. Hesthaven},
{\em Numerical methods for conservation laws: From analysis to algorithms},
 SIAM Publishing, Philadelphia (2017).

\bibitem{hundsdorfer2003}
{\sc W. Hundsdorfer, S.J. Ruuth and R.J. Spiteri},
{\em Monotonicity-preserving linear multistep methods},
SIAM Journal on Numerical Analysis 41 (2003), pp.~605--623.


\bibitem{SSPIFRK-SINUM}
{\sc L. Isherwood, S. Gottlieb, Z. Grant},  
{\em Strong Stability Preserving Integrating Factor Runge--Kutta Methods.}
SIAM Journal on Numerical Analysis 56(6) (2018), pp.~3276--3307.

\bibitem{SSPIFRK-downwind}
{\sc L. Isherwood, S. Gottlieb, Z. Grant},
{\em Downwinding for Preserving Strong Stability in Explicit Integrating Factor Runge--Kutta Methods},
 Pure and Applied Mathematics Quarterly 14(1) (2019), pp.~3-25.  

\bibitem{WENO}
{\sc G.-S. Jiang and C.-W. Shu},
{\em Efficient Implementation of Weighted ENO Schemes},
Journal of Computational Physics 126(1) (1996), pp.~202-228.



\bibitem{ketcheson2008}
{\sc D.~I. Ketcheson}, {\em Highly efficient strong stability preserving
  {R}unge--{K}utta methods with low-storage implementations}, SIAM Journal on
  Scientific Computing, 30 (2008), pp.~2113--2136.

\bibitem{ketcheson2009}
{\sc D.I. Ketcheson, C.B. Macdonald and S. Gottlieb},
{\em Optimal implicit strong stability preserving {R}unge-{K}utta methods},
Applied Numerical Mathematics 52 (2009), pp.~373--392.


\bibitem{ketcheson2009a}
 {\sc D. I. Ketcheson},
 {\em Computation of optimal monotonicity preserving general linear methods},
 Mathematics of Computation 78 (2009), pp.~1497--1513.

\bibitem{ketcheson2011}
 {\sc D. I. Ketcheson},
{\em Step sizes for strong stability preservation with downwind-biased operators},
SIAM Journal on Numerical Analysis 49 (4) (2011), pp.~1649--1660.

\bibitem{tsrk}
{\sc D.I. Ketcheson, S. Gottlieb, and C. B. Macdonald},
{\em Strong stability preserving two- step Runge-Kutta methods},
 SIAM Journal on Numerical Analysis 49 (2012), pp.~2618-2639.
 
 \bibitem{msrk}
{\sc  C. Bresten, S. Gottlieb, Z. Grant, D. Higgs, D.I. Ketcheson, and A. Nemeth},  
{\em Explicit strong stability preserving multistep Runge-Kutta methods},
  Mathematics of Computation 86 (2017), pp.~747-769.
 

\bibitem{kraaijevanger1991}
{\sc J.~F. B.~M. Kraaijevanger}, 
{\em Contractivity of {R}unge--{K}utta methods}, 
BIT, 31 (1991), pp.~482--528.


\bibitem{lenferink1991}
{\sc H.W.J. Lenferink},
{\em Contractivity-preserving implicit linear multistep methods},
Mathematics of Computation 56 (1991), pp.~177--199.


\bibitem{LeVequeBook}
{\sc R. J. LeVeque},
{\em Numerical Methods for Conservation Laws},
ETH Lectures in Mathematics Series, Birkhauser-Verlag, (1990).

\bibitem{NiesenWright}
{\sc J. Niesen and W.M. Wright},
{\em Algorithm 919: A Krylov subspace algorithm for evaluating the $\phi$-functions appearing
in exponential integrators},
ACM Transactions on Mathematical Software 38(3) (2012), pp.~22. 





\bibitem{ruuth2001}
{\sc S.~J. Ruuth and R.~J. Spiteri}, {\em Two barriers on
  strong-stability-preserving time discretization methods}, Journal of
  Scientific Computation, 17 (2002), pp.~211--220.


\bibitem{shu1988b}
{\sc C.-W. Shu}, {\em Total-variation diminishing time discretizations}, SIAM
  Journal on Scientific Statistical Computing 9 (1988), pp.~1073--1084.

\bibitem{shu1988}
{\sc C.-W. Shu and S.~Osher}, {\em Efficient implementation of essentially
  non-oscillatory shock-capturing schemes}, Journal of Computational Physics
  77 (1988), pp.~439--471.


\bibitem{Sidje}
{\sc R.B. Sidje},
{\em EXPOKIT: A software package for computing matrix exponentials},
ACM Transactions on Mathematical Software 24(1) (1998), pp.~130--156.


\bibitem{spijker1983}
{\sc M.N. Spijker},
{\em Contractivity in the numerical solution of initial value problems},
Numerische Mathematik 42 (1983), pp.~271--290.

\bibitem{spijker2007}
{\sc M.~Spijker}, {\em Stepsize conditions for general monotonicity in
  numerical initial value problems}, SIAM Journal on Numerical Analysis, 
  45 (2008), pp.~1226--1245.

\bibitem{SpiteriRuuth2002}
{\sc R.~J. Spiteri and S.~J. Ruuth}, {\em A new class of optimal high-order
  strong-stability-preserving time discretization methods}, SIAM Journal on
  Numerical Analysis, 40 (2002), pp.~469--491.





\end{thebibliography}
\end{document}